\theoremstyle{plain}
\newtheorem{theorem}{Theorem}[section]
\newtheorem*{theorem*}{Theorem}
\newtheorem{prop}[theorem]{Proposition}
\newtheorem{lemma}[theorem]{Lemma}
\newtheorem{cor}[theorem]{Corollary}
\newtheorem{theoremintro}{Theorem}
\theoremstyle{definition}
\newtheorem{example}[theorem]{Example}
\theoremstyle{remark}
\newtheorem{remark}[theorem]{Remark}
\newcommand{\Z}{\mathbb Z}
\newcommand{\C}{\mathbb C}
\renewcommand{\P}{\mathbb P}
\newcommand{\cC}{\mathcal{C}}
\newcommand{\cK}{\mathcal{K}}
\newcommand{\cPf}{\mathcal{P}\!\mathit{f}}
\newcommand{\Cliff}{\gamma}
\newcommand{\CH}{\mathrm{CH}}
\newcommand{\sign}{\mathrm{sign}}
\newcommand{\Mg}{\mathcal{M}}
\newcommand{\mar}{\mathrm{mar}}
\newcommand{\Kthree}{\mathcal{K}}
\newcommand{\Db}{\mathsf{D}^{\text{b}}}
\newcommand{\linedef}[1]{\textsl{#1}}
\newcommand{\bslash}{\smallsetminus}
\newcommand{\tensor}{\otimes}
\DeclareMathOperator{\Pic}{\mathrm{Pic}}
\newcommand{\dual}{^{\vee}}
\newcommand{\isom}{\cong}
\newcommand{\inv}{^{-1}}
\DeclareMathOperator{\disc}{\mathrm{disc}}
\newcommand{\exterior}{{\textstyle \bigwedge}}
\newcommand{\sheaf}[1]{\mathscr{#1}}
\newcommand{\OO}{\sheaf{O}}
\newcommand{\EE}{\sheaf{E}}
\begin{document}

\title[Brill--Noether special cubic fourfolds]{Brill--Noether special cubic fourfolds \\of discriminant 14} 

\author{Asher Auel}
\address{Department of Mathematics\\%
Dartmouth College\\%
Kemeny Hall\\%
Hanover, NH 03755}
\email{asher.auel@dartmouth.edu}



\begin{abstract}
  We study the Brill--Noether theory of curves on polarized K3
  surfaces that are Hodge theoretically associated to cubic fourfolds
  of discriminant 14.  We prove that any smooth curve in the
  polarization class has maximal Clifford index and deduce that a
  smooth cubic fourfold contains disjoint planes if and only if admits
  a Brill--Noether special associated K3 surface of degree 14.  As an
  application, we prove that the complement of the pfaffian locus,
  inside the Noether--Lefschetz divisor $\cC_{14}$ in the moduli space
  of smooth cubic fourfolds, is contained in the irreducible locus of
  cubic fourfolds containing two disjoint planes.
\end{abstract}

\dedicatory{To Bill Fulton, on the occasion of his 80th birthday.}

\maketitle

\vspace*{-.4cm}

\section*{Introduction}
\label{sec:introduction}

Let $X$ be a \linedef{cubic fourfold}, i.e., a smooth cubic
hypersurface $X \subset \P^5$ over the complex numbers.  Determining
the rationality of $X$ is a classical question in algebraic geometry.
Some classes of rational cubic fourfolds have been described by Fano
\cite{fano:cubic} and Tregub \cite{tregub:rationality_cubic_fourfold},
\cite{tregub:remarks_cubic_fourfolds}.  Beauville and
Donagi~\cite{beauville_donagi} prove that \linedef{pfaffian} cubic
fourfolds, i.e., those defined by pfaffians of skew-symmetric
$6\times 6$ matrices of linear forms, are rational.
Hassett~\cite{hassett:special} describes, via lattice theory,
Noether--Lefschetz divisors $\cC_d$ in the moduli space $\cC$ of
smooth cubic fourfolds.  A parameter count shows that $\cC_{14}$ is
the closure of the locus $\cPf$ of pfaffian cubic fourfolds; Hodge
theory shows (see \cite[\S3~Prop.~2]{voisin:cubic_fourfolds}) that
$\cC_8$ is the locus of cubic fourfolds containing a plane.
Hassett~\cite{hassett:some} identifies countably many divisors of
$\cC_8$ consisting of rational cubic fourfolds.  Recently, Addington,
Hassett, Tschinkel, and V\'arilly-Alvarado \cite{AHTVA} identify
countably many divisors of $\cC_{18}$ consisting of rational cubic
fourfolds, and Russo and Staglian\`o \cite{russo_stagliano},
\cite{russo_stagliano:42} have shown that the very general cubic
fourfolds in $\cC_{26}$, $\cC_{38}$, and $\cC_{42}$ are rational.
Nevertheless, it is expected that the very general cubic fourfold (as
well as the very general cubic fourfold containing a plane) is not
rational.

\smallskip

Short of a pfaffian presentation, how can one tell if a given cubic
fourfold is pfaffian?
Beauville~\cite{beauville:determinantal_hypersurfaces} provides a
homological criterion for a cubic hypersurface to be pfaffian, which
for cubic fourfolds is equivalent to containing a quintic del Pezzo
surface, but it is not clear how to translate this criterion into
Hodge theory.  More generally, how can one understand the complement
$\cC_{14} \bslash \cPf$ of the pfaffian locus?  Such questions are
implicit in \cite{ABBV:pfaffian} and
\cite{tregub:rationality_cubic_fourfold}, where cubic fourfolds with
certain numerical properties are shown to be outside or inside,
respectively, the pfaffian locus.  In particular, Tregub studies the
locus $\cC_\Pi$ of cubic fourfolds that contain two disjoint planes,
showing that this locus is irreducible of codimension 2 in $\cC$, and
that the general member does not contain a smooth quartic rational
normal scroll nor a quintic del Pezzo surface, hence cannot be
pfaffian.  Our main result is that this is essentially all of the
complement of the pfaffian locus.

\begin{theoremintro}
\label{thm:main}
The complement of the pfaffian locus $\cPf$, inside the
Noether--Lefschetz divisor $\cC_{14}$ of the moduli space of cubic
fourfolds, is contained in the irreducible locus $\cC_\Pi$ of cubic fourfolds
containing two disjoint planes.

In other words, any $X \in \cC_{14}$ is
pfaffian or contains two disjoint planes (or both).
\end{theoremintro}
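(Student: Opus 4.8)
The plan is to pass to the associated K3 surface and convert the dichotomy ``pfaffian versus two disjoint planes'' into a Brill--Noether dichotomy on that surface. To a cubic fourfold $X \in \cC_{14}$ Hassett's construction attaches a polarized K3 surface $(S,H)$ of degree $H^2 = 14$, hence of genus $8$, via a Hodge isometry between the primitive part of $H^4(X,\Z)$ and the primitive part of $H^2(S,\Z)$, suitably twisted. I would prove the contrapositive: if $X$ is not pfaffian, then $X$ contains two disjoint planes. The skeleton is the chain of equivalences
\[
X \text{ pfaffian} \iff S \text{ is a linear section of } G(2,6) \iff (S,H) \text{ is Brill--Noether general},
\]
together with the separate assertion that $X$ contains two disjoint planes if and only if $(S,H)$ is Brill--Noether special. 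Granting these, a non-pfaffian $X$ has a Brill--Noether special associated K3, hence contains two disjoint planes, and therefore lies in $\cC_\Pi$, which is irreducible by Tregub.

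The first equivalence is Beauville--Donagi: a pfaffian $X = \mathrm{Pf}\cap\P^5$ has for its associated K3 the linear section $S = G(2,V)\cap\P^8$ cut out by the annihilator $\P^8 = (\P^5)^{\perp}$ in the dual Plücker space, a degree-$14$ genus-$8$ K3, while Beauville's criterion (containment of a quintic del Pezzo surface, recalled in the introduction) pins the pfaffian locus precisely to this situation. The second equivalence is Mukai's theorem that a polarized K3 surface of genus $8$ is a transverse linear section of $G(2,6) \subset \P^{14}$ exactly when it is Brill--Noether general, i.e.\ when a general smooth $C \in |H|$ attains the maximal Clifford index $\lfloor (g-1)/2\rfloor = 3$; its failure is the presence of a line bundle on $S$ cutting out a pencil or net of low Clifford index on $C$.

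The crux is the remaining equivalence tying the geometry of $X$ to the Brill--Noether theory of $S$, and this I expect to be the main obstacle. For one direction I would start from disjoint planes $P_1, P_2 \subset X$: using the standard intersection numbers for a plane in a cubic fourfold, $P_i^2 = 3$, $h^2\cdot P_i = 1$, $P_1\cdot P_2 = 0$, the classes $h^2$ and $P_1+P_2$ span a rank-two sublattice of $H^{2,2}(X,\Z)$ with $(h^2)^2 = 3$, $(P_1+P_2)^2 = 6$, $h^2\cdot(P_1+P_2) = 2$, of discriminant $3\cdot 6 - 2^2 = 14$, placing $X$ in $\cC_{14}$; under the Hodge isometry this transports to an extra class $D \in \NS(S)$, and a Clifford-index computation along Green--Lazarsfeld/Knutsen lines (via $\mathrm{Cliff}(D|_C) = D\cdot(H-D)-2$) shows $D$ drops the Clifford index below $3$, so $(S,H)$ is Brill--Noether special. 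The harder converse needs two steps: first, a lattice analysis showing that, for a K3 associated to a discriminant-$14$ cubic, the transcendental constraints force any Brill--Noether special behaviour to arise from exactly this discriminant-$14$ ``two-plane'' configuration, ruling out the other a priori possible special line bundles; and second, a representability argument promoting the corresponding integral Hodge classes on $X$ to an honest pair of smooth, disjoint planes rather than mere cohomology classes. This second point is where the real work lies, and I would attack it either through Voisin-type results on the integral Hodge conjecture for cubic fourfolds of small discriminant or by a direct study of the Fano variety of planes on $X$. Once both directions are secured, the non-pfaffian part of $\cC_{14}$ maps into $\cC_\Pi$ and the theorem follows.
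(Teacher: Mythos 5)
Your skeleton --- pass to Hassett's associated K3, prove ``pfaffian $\iff$ Brill--Noether general associated K3,'' show that Brill--Noether specialness forces the two-plane lattice, and invoke Voisin--Hassett to represent the classes by honest planes --- is exactly the architecture of the paper (Proposition~\ref{prop:pfaffian_BNgeneral}, Theorem~\ref{thm:clifford=3}, Propositions~\ref{prop:inPi} and~\ref{prop:Pi}). But two of your steps fail as proposed. The most serious is that you conflate ``Brill--Noether special'' with ``Clifford index $<3$.'' For the two-plane configuration this is false: transporting $\Pi=\langle h^2,P,P'\rangle$ gives the K3 lattice $\sigma(\Pi)=\langle H,L\rangle$ with $L^2=2$, $L.H=7$, and your own formula gives $\Cliff(L|_C)=L.(H-L)-2=5-2=3$, the \emph{maximal} value; the restriction $L|_C$ is a $g^2_7$, which is Brill--Noether special because $\rho(8,2,7)=-1<0$ while being Clifford-general. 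So your forward computation (``$D$ drops the Clifford index below $3$'') is wrong, and --- worse for the converse, which is the direction the theorem actually needs --- your classification must \emph{isolate} this Clifford-general-but-special case rather than rule it out. Green--Lazarsfeld/Knutsen methods cannot reach it: Green--Lazarsfeld produces a line bundle on $S$ computing the Clifford index only when $\Cliff(C)$ is below the generic value, which is precisely what fails here. The paper needs Lelli-Chiesa's theorem on generalized Lazarsfeld--Mukai bundles (Theorem~\ref{thm:lelli}) to get the $g^2_7$ from a line bundle on $S$, and its mechanism for killing the remaining candidates of Table~\ref{tab:BN-special_K3} (those with $\gamma\leq 2$) is not ``transcendental constraints'' but the fact that the corresponding rank-3 cubic fourfold lattices contain short or long roots, hence cannot embed primitively in $A(X)$ for smooth $X$ by Looijenga--Laza (Theorem~\ref{thm:clifford=3}).

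Second, the equivalence ``$X$ pfaffian $\iff$ associated K3 Brill--Noether general'' does not follow from Beauville--Donagi plus Beauville's quintic del Pezzo criterion; as the introduction stresses, that criterion is homological and nobody knows how to read it off from Hodge theory --- it is the problem, not a tool. The hard direction is ``Brill--Noether general associated K3 $\Rightarrow$ $X$ pfaffian'': Mukai writes $S=G\cap L$ and one forms the dual section $X'=L^\perp\cap\Delta\dual$, but one must then (i) prove $X'$ is \emph{smooth}, which requires ruling out degree-5 elliptic pencils on $S$ (Hoff--Knutsen together with Remark~\ref{rem:quintic2} --- again a root computation); (ii) prove $X'$ has $(S,H)$ as its Hodge-theoretically associated K3, which the paper gets from Kuznetsov's homological projective duality and the Addington--Thomas theorem converting $\mathcal{A}_{X'}\isom\Db(S)$ into a Hodge isometry; and (iii) conclude $X\isom X'$ from the injectivity of $\cC_{K_{14}}\hookrightarrow\Kthree_{14}$. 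None of these appears in your proposal. Finally, a smaller point: your biconditionals refer to ``the'' associated K3, but a cubic in $\cC_{14}$ can carry several discriminant-14 markings with different Brill--Noether behavior (Example~\ref{notopen} exhibits cubics that are simultaneously pfaffian and contain two disjoint planes); the correct statements quantify over \emph{some} marking, and your contrapositive chain survives only after this requantification.
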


The proof combines several ingredients revolving around the
Brill--Noether theory of special divisors on curves in K3 surfaces of
degree 14.  We use the determination, due to
Mukai~\cite{mukai:curves_grassmannians}, \cite{mukai:Fano_threefolds}
of the smooth projective curves $C$ of genus 8 that are linear
sections of the grassmannian $G(2,6) \subset \P^{14}$.  This turns out
to be equivalent to $C$ lacking a $g^2_7$, equivalently, that $C$ is
Brill--Noether general.  We also use a modified conjecture of Harris
and Mumford, as proved by Green and
Lazarsfeld~\cite{green_lazarsfeld:divisors_curves_K3_surface}, as well
as the generalization due to
Lelli-Chiesa~\cite{lelli-chiesa:genealized_Lazarsfeld-Mukai_bundles},
on line bundles on K3 surfaces computing the Clifford indices of
smooth curves in a given linear system.  We also need the earlier work
of Saint-Donat~\cite{saint-donat:projective_models_K3} and
Reid~\cite{reid:hyperelliptic_linear_systems_K3},~\cite{reid:special_linear_systems_curves_K3},
on hyperelliptic and trigonal linear systems on K3 surfaces, as well
as useful refinements due to Knutsen~\cite{knutsen:kth_order},
\cite{knutsen:smooth_curves_K3_surfaces},~\cite{knutsen:gonality_Clifford_index}
of the original result by Green and Lazarsfeld.  Combining these
results with lattice theory computations for cubic fourfolds and their
associated K3 surfaces, as developed by Hassett~\cite{hassett:some},
\cite{hassett:special}, we prove that the Clifford index of curves in
the polarization class of any K3 surface of degree 14 associated to
$X$ must take the maximal value 3 (see Theorem~\ref{thm:clifford=3}),
putting strong constraints on the geometry of cubic fourfolds in terms
of the Brill--Noether theory of their associated K3 surfaces.

More generally, one might call a cubic fourfold $X$
\linedef{Brill--Noether special} if $X$ has an associated K3 surface
$S$ that is Brill--Noether special in the sense of
Mukai~\cite[Def.~3.8]{mukai:Fano_threefolds}, a condition implying
that $S$ has an ample divisor such that the general curve in its
linear system is Brill--Noether special, see \S\ref{subsec:BN-K3}.
Then our main result can be summarized by saying that a special cubic
fourfold of discriminant 14 is Brill--Noether special if and only if
it contains two disjoint planes.  It would be interesting to study the
Brill--Noether special loci in other divisors $\cC_d$ of special cubic
fourfolds, for example, in $\cC_{26}$, $\cC_{38}$, and $\cC_{42}$. In
the context of discriminant $26$, Farkas and Verra~\cite{farkas_verra}
also appeal to the Brill--Noether theory of some associated K3 surfaces.

Our result, and more generally the ability to detect a pfaffian cubic
fourfold via Hodge theory, has two immediate applications.  First, we
obtain a new explicit proof that every cubic fourfold in $\cC_{14}$ is
rational: Beauville and Donagi \cite{beauville_donagi} prove that any
pfaffian cubic fourfold is rational, and by a much more classical
construction going back to Fano, every cubic fourfold containing
disjoint planes is rational; this covers all cubic fourfolds in
$\cC_{14}$.  This rationality result was initially obtained by
Bolognesi, Russo, and Staglian\`o \cite{bolognesi_russo} using a much
more classical approach involving one apparent double point surfaces,
though this has been recently subsumed by the path-breaking work on
the deformation invariance of rationality by Kontsevich and Tschinkel
\cite{kontsevich_tschinkel}.  Second, we prove the existence of
nonempty irreducible components of $\cPf \cap \Pi$, which are
necessarily of codimension $\geq 3$ in $\cC$.  This immediately
implies that the pfaffian locus is not Zariski open in $\cC_{14}$.
While this result was initially obtained in the course of
conversations with M.~Bolognesi and F.~Russo based on the computer
algebra calculations of G.~Staglian\`o and earlier drafts of our
respective papers, the proof presented in \S\ref{notopen} does not
require any explicit computer algebra computations (as opposed to the
proof in \cite{bolognesi_russo}).  However, it still seems plausible
that the pfaffian locus is open inside the moduli space of marked
cubic fourfolds of discriminant 14.

The author is indebted to Y.~Tschinkel and F.~Bogomolov for
providing a stimulating work environment at the Courant Institute of
Mathematical Sciences, where this project started in May 2013, and to
B.~Hassett, who first suggested the possibility of investigating the
Brill--Noether theory of curves on K3 surfaces in the context of cubic
fourfolds. 
The author also thanks M.~Bolognesi and F.~Russo for animated and
productive conversations during the preparation of this manuscript in
March 2015, while we were exchanging our respective drafts.  We are
grateful to M.~Hoff, D.~Jensen, A.~L.~Knutsen, and M.~Lelli-Chiesa for
detailed explanations of various aspects of their work; to
N.~Addington, T.~Johnsen, A.~Kumar, R.~Lazarsfeld, and H.~Nuer for
helpful conversations; and to the anonymous referee for very
constructive comments on the manuscript.  The author was partially
supported by NSF grant DMS-0903039 and an NSA Young Investigator
Grant.

\section{Brill--Noether theory for polarized K3 surfaces of degree 14}
\label{sec:Special_divisors}

All varieties are assumed to be over the complex numbers and all K3
surfaces are assumed to be smooth and projective.

\subsection{Grassmannians and curves of genus 8}

Let $G(2,6) \subset \P^{14}$ be the grassmannian of $2$-planes in a
$6$-dimensional vector space, embedded in $\P^{14}$ via the Pl\"ucker
embedding.  It was classically known that a general flag of linear
subspaces $P \subset Q$ of dimension 6 and 7 in $\P^{14}$ cut from
$G(2,6)$ a K3 surface of degree 14 containing a canonical curve $C$ of
genus 8. 

Recall that a $g^r_d$ on a smooth projective curve $C$ is a line
bundle $A$ of degree $d$ with $h^0(C,A) \geq r+1$; it is
\linedef{complete} if $h^0(C,A)=r+1$.

\begin{theorem}[{Mukai~\cite{mukai:curves_grassmannians}}]
\label{thm:mukai_g27}
A smooth projective curve $C$ of genus 8 is a linear
section of the grassmannian $G(2,6) \subset \P^{14}$ if and only if
$C$ has no $g^2_7$.
\end{theorem}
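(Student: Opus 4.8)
The plan is to follow Mukai's reconstruction, whose central object is a rank-$2$ vector bundle $E$ on $C$ with $\det E\cong K_C$ and $h^0(C,E)=6$. I first record the numerics. The Grassmannian $G(2,6)\subset\P^{14}$ has dimension $8$ and Pl\"ucker degree $14$, so a transversal linear section by a $\P^{7}$ is a smooth curve of degree $14$ and genus $8$, and it is canonically embedded since the Pl\"ucker $\OO(1)$ restricts to $K_C$. Writing $0\to S\to V\otimes\OO\to Q\to 0$ for the tautological sequence on $G=G(2,V)$ with $\dim V=6$, restriction to a section $C$ gives $E:=S\dual|_C$, a rank-$2$ bundle with $\det E\cong K_C$; hence $\chi(E)=\deg E+2(1-g)=14-14=0$ and, since $E\dual\cong E\otimes K_C\inv$, Serre duality gives $h^1(E)=h^0(E)$. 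The six linear forms $V\dual=H^0(G,S\dual)$ restrict to sections of $E$, so $h^0(E)\ge 6$, and surjectivity of the evaluation $V\dual\otimes\OO_C\to E$ recovers the embedding $C\hookrightarrow G(2,V\dual)=G(2,6)$. Thus the whole statement reduces to the assertion that such a bundle $E$ exists, is stable and globally generated, precisely when $C$ carries no $g^2_7$.

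For the converse (the ``if'' direction) I would carry out the reconstruction under the hypothesis that $C$ is Brill--Noether general, i.e. has no $g^2_7$ (equivalently, has maximal Clifford index $3$). The steps are: (i) produce a \emph{stable} rank-$2$ bundle $E$ with $\det E\cong K_C$ and $h^0(E)=6$ inside the moduli space $SU_C(2,K_C)$ of dimension $3g-3=21$; (ii) show that $E$ is globally generated with $h^0(E)=6$ exactly, using that no low-degree pencil or net can create base points; (iii) form $\varphi_E\colon C\to G(2,H^0(E))=G(2,6)$ and note $\varphi_E^{*}\OO(1)\cong\det E\cong K_C$, so that composing with Pl\"ucker is the canonical embedding and the image spans the canonical $\P^{7}$ while lying on $G(2,6)$; and (iv) conclude $G(2,6)\cap\P^{7}=C$ scheme-theoretically by matching degrees, $\deg\bigl(G(2,6)\cap\P^{7}\bigr)=14=\deg C$, together with smoothness, i.e. that the section is transversal.

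For the ``only if'' direction I would run the same machinery in reverse: given a smooth linear section, the restricted tautological bundle $E=S\dual|_C$ is stable with $\det E\cong K_C$ and $h^0(E)=6$, and I would show that a curve carrying such a bundle can have no $g^2_7$. Concretely, a complete $g^2_7$ is a line bundle $A$ with $\deg A=7$ and $h^0(A)=3$, with residual $K_C\otimes A\inv$ of the same type; the content is to show, via the rank-$2$ Brill--Noether analysis, that the presence of $A$ is incompatible with the existence of a stable $E$ of the above invariants (it forces a destabilizing sub-line bundle of degree $\ge 7=\mu(E)$, equivalently a degeneration of the congruence of lines $C\subset G(2,6)$). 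Hence a transversal linear section is Brill--Noether general.

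The main obstacle is step (i): the existence and generic uniqueness of the Mukai bundle $E$ under the sharp condition $h^0(E)=6$, governed precisely by the nonexistence of a $g^2_7$. This is where one genuinely needs the Brill--Noether theory of rank-$2$ bundles with canonical determinant: the symmetry $E\cong E\dual\otimes K_C$ forces the expected codimension of $\{h^0\ge 6\}$ inside $SU_C(2,K_C)$ to equal the full dimension $3g-3=21$, rather than its naive value, so that the locus is $0$-dimensional and, for Brill--Noether general $C$, nonempty with stable general member. This matches the classical parameter count for linear sections up to projective equivalence, $\dim Gr(\P^7,\P^{14})-\dim PGL_6=56-35=21=\dim\mathcal{M}_8$, which already shows the construction is dominant with finite fibres; the difficulty is upgrading this count into the clean equivalence with the $g^2_7$ condition, after which the transversality in step (iv) is a routine degree computation.
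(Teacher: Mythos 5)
First, a point of comparison: the paper does not prove this statement at all --- it is imported verbatim from Mukai's \emph{Curves and Grassmannians} --- so the only meaningful benchmark is Mukai's original argument. Your proposal reproduces its architecture faithfully: the rank-$2$ bundle $E$ with $\det E \isom K_C$ and $h^0(E)=6$, the ``only if'' direction via restriction of the tautological subbundle, the ``if'' direction via the morphism $\Phi_E$ to $G(2,H^0(E))$, and the consistency check $\dim Gr(\P^7,\P^{14})-\dim PGL_6 = 56-35 = 21 = \dim \Mg_8$. The numerics you record (dimension and degree of $G(2,6)$, adjunction giving the canonical embedding, $\chi(E)=0$ and $h^1(E)=h^0(E)$ from the canonical determinant) are all correct.

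However, as a proof the proposal has genuine gaps, and you flag the principal one yourself: step (i) --- existence of a \emph{stable}, globally generated $E$ with $\det E\isom K_C$ and exactly $h^0(E)=6$ on any curve with no $g^2_7$ --- is precisely the content of Mukai's theorem, and it is deferred rather than proved. The heuristic that the canonical-determinant symmetry makes the expected codimension of $\{h^0\geq 6\}$ in $SU_C(2,K_C)$ equal to $21$ explains why the dimension count is consistent, but an expected-dimension statement yields neither nonemptiness nor the implication that absence of a $g^2_7$ puts a stable, globally generated point in this locus; Mukai gets this from explicit constructions (extensions built from low-degree pencils, elementary transformations) and a case analysis, none of which appears here. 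The ``only if'' direction has the same status: the assertion that a $g^2_7$, say $A$, forces a destabilizing sub-line bundle of $E$ is the crux and does not follow from numerics --- $\mathrm{Hom}(A,E)=H^0(E\tensor A\inv)$ has $\chi=-14<0$, so producing a nonzero map requires a real cohomological argument (via kernel/Lazarsfeld--Mukai bundles), which is missing. Finally, step (iv) is not ``a routine degree computation'': the equality $\deg\bigl(G(2,6)\cap\P^7\bigr)=14=\deg C$ identifies the intersection with $C$ only after one knows the intersection is proper, i.e.\ of pure dimension $1$, which itself needs an argument. In short, you have correctly reconstructed the skeleton of Mukai's proof, but each of the load-bearing steps is asserted rather than established.
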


The Brill--Noether theorem states that when
$\rho(g,r,d) = g-(r+1)(g-d+r)$ is negative, the general curve of genus
$g$ has no $g^r_d$.  A curve supporting such a $g^r_d$ is called
\linedef{Brill--Noether special}.  A curve not supporting any $g^r_d$
whenever $\rho(g,r,d)<0$ is called \linedef{Brill--Noether general}.
When $\rho(g,r,d) = -1$, Eisenbud and
Harris~\cite{eisenbud_harris:irreducibility} proved that the locus of
curves, in the moduli space $\Mg_g$ of curves of genus $g$, that
support such a $g^r_d$, is irreducible of codimension~1.  In
particular, the locus of curves of genus 8 having a $g^2_7$ is of
codimension 1 in $\Mg_8$.

The \linedef{Clifford index} of a line bundle $A$ on a smooth
projective curve $C$ is the integer
$$
\Cliff(A) = \deg(A) - 2\, r(A),
$$
where $r(A) = h^0(C,A)-1$ is the \linedef{rank} of $A$.  The Clifford
index of $C$ is
$$
\Cliff(C) = \min \{\, \Cliff(A) \; : \; h^0(C,A) \geq 2
~\text{and}~h^1(C,A) \geq 2 \,\}
$$
and a line bundle $A$ on $C$ is said to compute the Clifford index of $C$ if
$\Cliff(A)=\Cliff(C)$. Clifford's theorem states that $\Cliff(C) \geq
0$ with equality if and only if $C$ is hyperelliptic; similarly
$\Cliff(C)=1$ if and only if $C$ is trigonal or a smooth plane
quintic.  At the other end, $\Cliff(C) \leq \lfloor (g-1)/2 \rfloor$
with equality whenever $C$ is Brill--Noether general.

Up to taking the adjoint line bundle $\omega_C \tensor A\dual$, which
has the same Clifford index, we can always assume that nontrivial
special divisors $g^r_d$ satisfy $1 \leq r \leq \lfloor (g-1)/2
\rfloor$ and $2 \leq d \leq g-1$.  For $g=8$, we list them for the
convenience of the reader:
$$
\renewcommand{\arraystretch}{1.2}
\begin{array}{||c||c|c||c|c||c|c|c||c|c|c||}
\hline\hline
\gamma &
\multicolumn{2}{|c||}{3} &
\multicolumn{2}{|c||}{2} &
\multicolumn{3}{|c||}{1} &
\multicolumn{3}{|c||}{0} \\\hline 
g^r_d & g^1_5 & g^2_7 & g^1_4 & g^2_6 & g^1_3 & g^2_5 & g^3_7 & g^1_2&g^2_4&g^3_6\\
\hline
\rho&0&-1&-2&-4&-4&-7&-8&-6&-10&-12\\\hline\hline
\end{array}
$$
In genus 8, the Brill--Noether special locus is controlled by the
existence of a $g^2_7$.

\begin{lemma}
\label{lem:BN-special_g27}
A smooth projective curve $C$ of genus 8 is Brill--Noether special if
and only if it has a complete $g^2_7$.
\end{lemma}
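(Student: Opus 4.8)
My plan is to prove the two implications separately, reducing the reverse implication to an \emph{existence} statement, after which completeness comes essentially for free. The tools are the classification of special series in the table, the base-point-free pencil trick, Serre duality, and Clifford's theorem.

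The implication $(\Leftarrow)$ is immediate: a complete $g^2_7$ is in particular a $g^2_7$, and $\rho(8,2,7)=-1<0$, so $C$ is Brill--Noether special by definition. For $(\Rightarrow)$, reading off the table, a special series has Clifford index $\Cliff\in\{0,1,2,3\}$, and the only $\gamma=3$ series with $\rho<0$ is $g^2_7$ (the companion $g^1_5$ has $\rho=0$). Hence $C$ being Brill--Noether special means either $C$ already has a $g^2_7$, or $\Cliff(C)\leq 2$. Two remarks make completeness automatic. First, if $L$ has degree $7$ and $h^0(L)\geq 3$, then $L$ is special (Riemann--Roch gives $h^1(L)=h^0(L)$), so Clifford forces $h^0(L)\leq 4$; if $h^0(L)=4$ then $\Cliff(L)=1$, whence $\Cliff(C)\leq 1$ and $C$ is hyperelliptic or trigonal (the plane quintic has genus $6$), a case I treat directly. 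Second, if $M$ is a complete $g^2_e$ with $e\leq 6$, then $\omega_C\tensor M\dual$ is effective, so adding a general point $p$ leaves $h^0$ unchanged and $M(p)$ is again a complete net; iterating yields a complete $g^2_7$. Thus it suffices to exhibit, whenever $\Cliff(C)\leq 2$, a complete net of degree at most $6$ (or a $g^2_7$ directly), and in the remaining case $\Cliff(C)=3$ any $g^2_7$ is automatically complete.

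The cases $\Cliff(C)\leq 2$ coming from a net are easy. If $C$ is hyperelliptic with $H=g^1_2$, then $2H$ is a complete $g^2_4$ (using $h^0(nH)=n+1$ for $n\leq g-1$). If $C$ is trigonal with pencil $A=g^1_3$, the base-point-free pencil trick gives $h^0(2A)\geq 3$, and in fact $h^0(2A)=3$, since $h^0(2A)=4$ would make $2A$ a $g^3_6$ of Clifford index $0$ and force $C$ to be hyperelliptic; thus $2A$ is a complete $g^2_6$. Finally, if $\Cliff(C)=2$ is computed by a $g^2_6$, that is already a complete net of degree $6$. In each case the upgrade above produces a complete $g^2_7$.

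The genuinely delicate case, which I expect to be the main obstacle, is when $\Cliff(C)=2$ is realized \emph{only} by a base-point-free $A=g^1_4$, with no $g^2_6$ available --- the generic tetragonal curve, which indeed has no $g^2_6$ because $\rho(8,2,6)=-4$. The pencil trick now yields only $h^0(2A)\geq 3$, i.e.\ a net $g^2_8$ of degree one too large; if $h^0(2A)=4$ the adjoint $\omega_C\tensor(2A)\dual$ is a $g^2_6$ and I win, but the typical situation is $h^0(2A)=3$, where no net of degree $\leq 6$ is visible from $A$ alone. The difficulty is structural: the target $g^2_7$ is special ($\rho=-1$), whereas $2A$, as well as the sums $A\tensor B$ with $B$ a $g^1_5$ (which exists on every genus-$8$ curve since $\rho(8,1,5)=0$), are base-point-free, so no manipulation of base points can reach it --- the $g^2_7$ must come from points in special position. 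Here I would instead use the geometry of the canonical model: a tetragonal genus-$8$ curve lies on a three-dimensional rational normal scroll swept out by the planes $\langle D\rangle$ for $D\in|A|$, and I would extract the $g^2_7$ as the residual $\omega_C\tensor A\dual(-D)$ for a length-$3$ divisor $D$ imposing only two conditions on $|\omega_C\tensor A\dual|$ (equivalently, a trisecant to the corresponding projected model). Producing this special secant --- or, alternatively, invoking the gonality-refined Brill--Noether existence theorem for $4$-gonal curves --- is the crux of the argument.
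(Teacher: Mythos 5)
Your overall strategy --- reducing to the Clifford index, handling completeness by adding general points, and treating the hyperelliptic, trigonal, and $g^2_6$ cases directly --- is sound, and in those cases your argument is correct and, if anything, more self-contained than the paper's. But the proposal has a genuine gap exactly where you flag it: the tetragonal case, where $\Cliff(C)=2$ is computed only by a $g^1_4$. You describe two possible strategies (producing a trisecant line to the degree-$10$ model of $C$ in $\P^4$ given by $|\omega_C \tensor A\dual|$, or invoking a gonality-refined Brill--Noether existence theorem) but carry out neither, and neither closes easily. The enumerative trisecant count for a degree-$10$, genus-$8$ curve in $\P^4$ is only a virtual number; to get actual existence for an \emph{arbitrary} tetragonal curve one must control the degenerate configurations (the model failing to be an embedding, excess families of secants), which is a real argument, not a remark. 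And the gonality-constrained existence results you allude to (Pflueger, Jensen--Ranganathan) concern the \emph{general} $k$-gonal curve, whereas the lemma needs a $g^2_7$ on \emph{every} genus-$8$ curve of Clifford index $2$; generality is precisely what you cannot assume, since the curves being analyzed are themselves non-generic. As you correctly note, the case is non-vacuous (the generic tetragonal genus-$8$ curve has no $g^2_6$), so this is the hard content of the lemma, not a peripheral case.

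For comparison, the paper disposes of this case by citation: any genus-$8$ curve with a $g^2_6$ has a $g^1_4$, and any genus-$8$ curve with a $g^1_4$ has a $g^2_7$, by \cite[Lemmas~3.4,~3.8]{mukai:curves_grassmannians} --- the same Mukai machinery behind Theorem~\ref{thm:mukai_g27}. (The paper's completeness step is also lighter: it just observes that having any $g^2_7$ forces a complete one, and in the low Clifford index cases adds base points to multiples of the pencil, not needing the series to stay complete along the way.) If you replace your final paragraph by an appeal to Mukai's lemma, or supply a full proof of it along the scroll-theoretic lines you sketch, the rest of your write-up stands as a correct, slightly different organization of the same argument.
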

\begin{proof}
First note that if a curve has a complete $g^r_d$, then it has a
complete $g^k_d$ for all $k$ between $d-g$ and $r$.  Hence if $C$ has
a $g^2_7$ then it has a complete $g^2_7$.
We can argue by the Clifford index.  In Clifford index~3, the only
special divisor is a $g^2_7$.  For Clifford index 2, we use the facts
that any genus 8 curve with a $g^2_6$ has a $g^1_4$ and any genus 8
curve with a $g^1_4$ has a $g^2_7$, see
\cite[Lemmas~3.4,~3.8]{mukai:curves_grassmannians}.  In Clifford index~1, any genus 8 curve is trigonal, so taking twice the $g^1_3$ and
adding a base point will result in a $g^2_7$.  Finally, in Clifford
index 0, the curve is hyperelliptic, so taking thrice the~$g^1_2$ and
adding a base point will result in a $g^2_7$.
\end{proof}

\subsection{Brill--Noether theory for polarized K3 surfaces}
\label{subsec:BN-K3}

A \linedef{polarized K3 surface} $(S,H)$ of degree $d$ is a smooth
projective K3 surface $S$ together with a primitive ample line bundle
$H$ of self-intersection $d \geq 2$.  If $C \subset S$ is a smooth
irreducible curve in the linear system $|H|$, so that $|H|$ is base
point free (i.e., $H$ is globally generated), then
$d = 2g-2$, where $g$ is the genus of $C$.  Following
Mukai~\cite[Def.~3.8]{mukai:Fano_threefolds}, we say that a polarized
K3 surface $(S,H)$ of degree $2g-2$ is \linedef{Brill--Noether
  general} if $h^0(S,H')\, h^0(S,H'') < h^0(S,H)=g+1$ for any
nontrivial decomposition $H=H'\tensor H''$.  Otherwise, we say
\linedef{Brill--Noether special}.  If a smooth irreducible curve
$C \in |H|$ is Brill--Noether general then it follows that $(S,H)$ is
Brill--Noether general, cf.\ \cite[Rem.~10.2]{johnsen_knutsen:K3}.
While the converse is an open question in general, for low degrees it
was checked by Mukai, using a case-by-case analysis.

\begin{theorem}
\label{thm:BNgeneral}
A polarized K3 surface $(S,H)$, with $H$ globally generated of degree
$\leq 18$ or $22$, is Brill--Noether general if and only if some smooth
irreducible $C \in |H|$ is Brill--Noether general.
\end{theorem}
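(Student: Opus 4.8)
The plan is to take the forward implication as essentially known---if some smooth irreducible $C \in |H|$ is Brill--Noether general, then so is $(S,H)$, as recorded in \cite{johnsen_knutsen:K3}---and to concentrate on the converse in contrapositive form: assuming that \emph{every} smooth irreducible $C \in |H|$ is Brill--Noether special, I would produce a nontrivial decomposition $H \isom H' \tensor H''$ into effective classes with $h^0(S,H')\,h^0(S,H'') \geq h^0(S,H) = g+1$, witnessing that $(S,H)$ is Brill--Noether special. Fix a general smooth $C \in |H|$, which is Brill--Noether special by hypothesis. Replacing a special divisor by its adjoint and stripping base points, I may assume $C$ carries a complete, base-point-free $A = g^r_d$ with $\rho(g,r,d) < 0$, $r \geq 1$, and $h^1(C,A) = g-d+r \geq 2$; in particular $A$ contributes to the Clifford index.

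The engine is the Lazarsfeld--Mukai bundle $E = E_{C,A}$, the rank $r+1$ bundle on $S$ obtained from $A$ by the standard elementary modification, with $\det E \isom H$, $c_2(E) = d$, and $h^0(S,E) = h^0(C,A) + h^1(C,A) \geq 4$. A direct Riemann--Roch computation gives the Mukai self-pairing $\langle v(E), v(E)\rangle = c_1(E)^2 - 2\,\mathrm{rk}(E)\bigl(\chi(E) - \mathrm{rk}(E)\bigr) = (2g-2) - 2(r+1)(g-d+r) = 2\,\rho(g,r,d) - 2$. Since $\rho(g,r,d) < 0$, this quantity is $< -2$. On the other hand, any \emph{simple} sheaf $F$ on a K3 surface satisfies $\langle v(F), v(F)\rangle = \mathrm{ext}^1(F,F) - 2 \geq -2$. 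Hence $E$ is not simple: it carries an endomorphism that is not a homothety, and after subtracting an eigenvalue I obtain a short exact sequence $0 \to F \to E \to G \to 0$ with $F$ and $G$ of positive rank and $c_1(F) + c_1(G) = H$.

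It remains to convert this destabilizing subsheaf into the desired decomposition of $H$ at the level of global sections. Saturating $F$ and passing to determinants yields effective classes $H' = \det F$ and $H'' = \det G$ with $H \isom H' \tensor H''$, and the global generation of $E$ together with the vanishing $h^1(S,E) = 0$ must then be leveraged to bound $h^0(S,H')\,h^0(S,H'')$ from below by $g+1$. This last step is the main obstacle, and it is precisely where the hypothesis on the degree enters: controlling the ranks and the section spaces of the pieces $F$ and $G$---equivalently, ruling out higher-rank indecomposable destabilizers that fail to descend to an honest pencil-times-pencil estimate---is not automatic, and forces a case-by-case analysis across the genera $g \leq 10$ and $g = 12$, following Mukai's explicit models (and available, for $r=1$, through Donagi--Morrison and, in general, through the generalized Lazarsfeld--Mukai bundle techniques of Lelli-Chiesa~\cite{lelli-chiesa:genealized_Lazarsfeld-Mukai_bundles}). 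The genus $g = 11$, corresponding to the excluded degree $20$, is exactly the case where no such uniform descent is available, consistent with the absence of a Mukai model in that single degree.
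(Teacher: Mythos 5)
Your setup is sound: the Lazarsfeld--Mukai bundle $E=E_{C,A}$, the computation $\langle v(E),v(E)\rangle = 2\rho(g,r,d)-2 < -2$, and the conclusion that $E$ is non-simple are all correct, and this is indeed the machinery underlying the results this theorem rests on. But the proposal stops exactly where the theorem begins. A filtration $0 \to F \to E \to G \to 0$ with $\det F \tensor \det G \isom H$ does not, by itself, come anywhere near the inequality $h^0(S,H')\,h^0(S,H'') \geq g+1$: the natural section count from the filtration gives only the \emph{sum} bound $h^0(F)+h^0(G) \geq h^0(E) = g+1-\Cliff(A) \leq g+1$, not a product bound (and not for the determinants --- for higher-rank pieces $h^0(F)$ can exceed $h^0(\det F)$, whose effectivity itself requires an argument). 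Your final paragraph concedes that closing this gap ``forces a case-by-case analysis across the genera $g \leq 10$ and $g = 12$, following Mukai's explicit models'' --- but that case-by-case analysis \emph{is} the theorem: it is precisely Mukai's result being quoted. As written, the argument reduces the statement to the statement, so it is circular rather than merely incomplete.

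For comparison, the paper does not reprove Mukai's low-degree verification either; it cites it, and then notes that the results it assembles suffice to give a complete argument in the one degree it needs, $d=14$ (genus $8$), where the descent-plus-counting step you leave open is carried out explicitly: Lemma~\ref{lem:BN-special_g27} reduces Brill--Noether speciality of a genus $8$ curve to the existence of a complete $g^2_7$; Theorem~\ref{thm:green_laz} (when $\Cliff(C)<3$, with the refinements of Saint-Donat, Reid, and Knutsen) or Theorem~\ref{thm:lelli} together with \cite[Prop.~10.5]{johnsen_knutsen:K3} (when $\Cliff(C)=3$) descends the special divisor to a line bundle $L$ on $S$ with controlled invariants (e.g.\ $L^2=2$, $L.H=7$ in the Clifford-general case); and then Riemann--Roch yields $h^0(S,L)\,h^0(S,H\tensor L\dual) \geq 3\cdot 3 = 9 = g+1$, witnessing that $(S,H)$ is Brill--Noether special. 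To make your proposal a proof you would have to execute this descent and counting in each relevant genus (or at least in genus $8$, which is all the paper requires); otherwise the citation of Mukai's theorem should replace, not conclude, the argument.
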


Of course, we are mainly interested in the degree 14 case, where the
results assembled below will suffice to prove the theorem.

The existence of special divisors on curves in a K3 surface was
considered by Saint-Donat \cite{saint-donat:projective_models_K3} and
Reid \cite{reid:hyperelliptic_linear_systems_K3},
\cite{reid:special_linear_systems_curves_K3}.  Harris and Mumford
conjectured that the gonality of a curve should be constant in a base
point free linear system on a K3 surface.  A counterexample was found
by Donagi and Morrison \cite{donagi_morrison} (in fact, this turned
out to be the unique counterexample, cf.\ \cite{ciliberto_pareschi},
\cite{knutsen:two_conjectures_curves_K3_surfaces}) and the conjecture
was modified by Green~\cite[Conj.~5.8]{green:koszul_1} to one about
the constancy of the Clifford index in a linear system.  In a similar
spirit, one is interested in the question of when a given $g^r_d$ on a
curve in a K3 surface is the restriction of a line bundle from the K3.
The conjecture of Green was proved in a celebrated paper by Green and
Lazarsfeld.

\begin{theorem}[Green--Lazarsfeld~\cite{green_lazarsfeld:divisors_curves_K3_surface}]
\label{thm:green_laz}
Let $S$ be a K3 surface and $C \subset S$ a smooth irreducible curve
of genus $g \geq 2$.  Then
$
\Cliff(C') = \Cliff(C)
$
for every smooth curve $C' \in |C|$.  Furthermore, if $\Cliff(C) <
\lfloor (g-1)/2 \rfloor$ then there exists a line bundle $L$ on $S$
whose restriction to any $C' \in |C|$ computes the Clifford index of
$C'$.
\end{theorem}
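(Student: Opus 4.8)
The plan is to funnel everything through the Lazarsfeld--Mukai construction, which converts a special linear series on a curve into a vector bundle on the ambient K3 surface. Fix a smooth $C \in |C|$ and a line bundle $A$ computing $\Cliff(C)$; after possibly replacing $A$ by its adjoint $\omega_C \tensor A\dual$ and stripping base points, I may assume that $A$ and $B:=\omega_C\tensor A\dual$ are both globally generated, say with $h^0(C,A)=r+1$ and $\deg A = d$, so that $\Cliff(A)=d-2r=\Cliff(C)$. Writing $\iota\colon C \hookrightarrow S$ and dualizing the evaluation sequence $0 \to F \to H^0(C,A)\tensor \OO_S \to \iota_*A \to 0$ produces, via the adjunction $\omega_C \isom \OO_S(C)|_C$ on the K3 surface, a locally free sheaf $E:=F\dual$ sitting in
$$0 \to H^0(C,A)\dual \tensor \OO_S \to E \to \iota_* B \to 0.$$
A Chern class computation gives $\det E \isom \OO_S(C)$, $c_2(E)=d$, and $\mathrm{rk}\,E = r+1$; global generation of $A$ and $B$ makes $E$ globally generated, and one finds $h^1(S,E)=h^2(S,E)=0$ together with $h^0(S,E) = h^0(C,A)+h^0(C,B) = g+1-\Cliff(C)$.

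The first step is a rigidity dichotomy for $E$, read off from its Mukai vector $v(E)=(r+1,\,[C],\,r+g-d)$, whose self-pairing is $\langle v(E),v(E)\rangle = (2g-2) - 2(r+1)(r+g-d)$. Since a simple bundle on a K3 surface satisfies $\langle v,v\rangle \geq -2$, and since the displayed pairing drops below $-2$ precisely when $\Cliff(C)$ and the rank $r$ of the computing series are small relative to $g$, I expect that for a series computing a sub-maximal Clifford index the bundle $E$ is never simple and hence carries a nonscalar endomorphism $\phi\in\mathrm{End}(E)$. I then plan to split off a line bundle from $\phi$: passing to a generalized eigensheaf, taking determinants, and saturating should yield a short exact sequence
$$0 \to M \to E \to N\tensor \sheaf{I}_Z \to 0$$
with $M,N \in \Pic(S)$, $N \isom \OO_S(C)\tensor M\inv$, and $Z \subset S$ zero-dimensional. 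The positivity forced by global generation of $E$ makes $M$ and $N$ effective with $h^0(S,M),\,h^0(S,N)\geq 2$ and $M\cdot N \geq 0$.

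The second step is to set $L:=M$ and verify that this \emph{fixed} line bundle on $S$ computes the Clifford index on every smooth member simultaneously. For any smooth $C' \in |C|$ the restriction sequence, together with the vanishing $h^1(S,L)=h^1(S,\OO_S(C)\tensor L\inv)=0$ and $h^0(S,L\tensor\OO_S(-C))=0$, shows $h^0(C',L|_{C'})=h^0(S,L)$ and $h^1(C',L|_{C'})=h^0(S,N)$, both independent of $C'$; by Riemann--Roch on the K3 surface this gives
$$\Cliff(L|_{C'}) = L\cdot C - L^2 - 2,$$
a topological quantity constant across the linear system, and one checks it equals $\Cliff(A)=\Cliff(C)$ on the original curve. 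Thus $L|_{C'}$ contributes to, and in fact computes, $\Cliff(C')$ for each $C'$, which is the asserted line bundle. Constancy of the Clifford index now follows: in the sub-maximal range it is immediate from the displayed formula, while if $\Cliff(C)=\lfloor (g-1)/2\rfloor$ is maximal and some $C' \in |C|$ had strictly smaller index, then applying the construction to $C'$ would manufacture a line bundle forcing $\Cliff(C)=\Cliff(C')<\lfloor(g-1)/2\rfloor$ as well, a contradiction.

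The hard part is the middle step. Producing a genuine destabilizing line bundle on $S$ out of a mere nonscalar endomorphism, and then controlling the length and position of the residual subscheme $Z$, is delicate---especially outside the generic situation where $\Cliff(C)$ is computed by the gonality pencil $(r=1)$, since in the exceptional Coppens--Martens range $\Cliff(C)=\mathrm{gon}(C)-3$ the computing series can have higher rank and the pairing computation must be rerun case by case. One must also guarantee the vanishing $h^1(S,L)=h^1(S,\OO_S(C)\tensor L\inv)=0$ uniformly over the linear system and rule out $L|_{C'}$ acquiring unexpected sections; this is exactly where the fine classification of hyperelliptic and trigonal linear systems on K3 surfaces, together with the single Donagi--Morrison exception, must be invoked rather than the numerical invariants of $E$ alone.
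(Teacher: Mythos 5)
First, a point of orientation: the paper never proves this statement --- it is imported verbatim, with attribution, as a theorem of Green--Lazarsfeld --- so your proposal can only be measured against the original argument of \cite{green_lazarsfeld:divisors_curves_K3_surface}. You have correctly reconstructed its skeleton: the Lazarsfeld--Mukai bundle $E$, non-simplicity of $E$ via the Mukai self-pairing $\langle v(E),v(E)\rangle = 2\rho(g,r,d)-2 < -2$, extraction of line bundles on $S$, and constancy of the topological quantity $L.C - L^2 - 2$ across the linear system. Your closing logic (running the construction symmetrically from $C'$ to get both inequalities, and the contradiction in the maximal-index case) is also sound.

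The genuine gap is your middle step, and it is worse than ``delicate'': as written it is impossible outside the pencil case. If the computing series $A$ is a $g^r_d$ with $r \geq 2$ --- for instance a $g^2_6$ or $g^2_7$ on a genus-$8$ curve, which are exactly the cases this paper needs the theorem for --- then $\mathrm{rk}\,E = r+1 \geq 3$, whereas any exact sequence $0 \to M \to E \to N \tensor \sheaf{I}_Z \to 0$ with $M, N \in \Pic(S)$ forces $\mathrm{rk}\,E = 2$ by counting ranks. The kernel and image of a nonscalar endomorphism $\phi$ are sub- and quotient sheaves of complementary ranks, each possibly of rank $\geq 2$; ``taking determinants'' destroys the exactness, and what survives is only a decomposition $[C] = c_1(\ker\phi) + c_1(\mathrm{im}\,\phi)$ into two effective classes. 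Proving that each of these classes moves ($h^0 \geq 2$), that the needed cohomology vanishings hold, and that the resulting index satisfies $L.C - L^2 - 2 \leq \Cliff(A)$ is the actual technical content of Green--Lazarsfeld's paper, and it is precisely what you defer to ``the hard part'' without resolving; nor can you escape via Coppens--Martens, since in the exceptional range $\Cliff(C) = \mathrm{gon}(C)-3$ no pencil computes the Clifford index, so the rank-$2$ case genuinely does not suffice. A smaller but real gap: ``I expect that $E$ is never simple'' requires showing $\rho(g,r,d) < 0$ for \emph{every} $g^r_d$ computing a sub-maximal Clifford index. Writing $c = d - 2r$, this amounts to positivity of the concave function $f(r) = (r+1)(g-c-r) - g$ on the range $1 \leq r \leq g-c-2$ imposed by $h^0, h^1 \geq 2$; both endpoint values equal $g - 2c - 2$, which is positive exactly because $c < \lfloor (g-1)/2 \rfloor$. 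This is short, but it must be said, since it is the only place the sub-maximality hypothesis enters.
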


We can thus define the \linedef{Clifford index} $\Cliff(S,H)$ of a
polarized K3 surface $(S,H)$ with $H$ globally generated to be the
Clifford index of any smooth irreducible curve $C \in |H|$, which is
well-defined by Theorem~\ref{thm:green_laz}.

In the case where $(S,H)$ has degree 14, so that a smooth curve
$C \in |H|$ has genus 8, we have that $\Cliff(S,H) \leq 3$.
If $(S,H)$ is Brill--Noether general, then by
Theorem~\ref{thm:BNgeneral}, some smooth curve $C \in |H|$ is
Brill--Noether general (hence has maximal Clifford index), so that
$\Cliff(S,H)=3$.
When $(S,H)$ is Brill--Noether special and ${\Cliff(S,H) < 3}$, the
result of Green and Lazarsfeld allows us to find a line bundle on $S$
whose restriction to $C \in |H|$ computes the Clifford index.  In
fact, already for $\Cliff(S,H) \leq 1$, results of
Saint-Donat~\cite[Thm.~5.2]{saint-donat:projective_models_K3} and
Reid~\cite[Thm.~1]{reid:hyperelliptic_linear_systems_K3} ensure that
these line bundles can be chosen to be elliptic pencils, see
\S\ref{subsec:lattice_polarized_K3} for details.  Finally, when
$(S,H)$ is Brill--Noether special and $\Cliff(S,H)=3$, we would like
to know if a $g^2_7$ on a smooth curve $C \in |H|$ is the restriction of a
line bundle on~$S$.  Since a $g^2_7$ has the generic Clifford
index, we cannot appeal to the result of Green and Lazarsfeld.  This
situation, of Clifford general but not Brill--Noether general
polarized K3 surfaces, is discussed more generally in
\cite[\S10.2]{johnsen_knutsen:K3}.

To this end, we have the following much more powerful result of
Lelli--Chiesa, concerning when a specific $g^r_d$ on a curve $C
\subset S$ lying in a K3 surface is the restriction of a line bundle
on $S$.

\begin{theorem}[Lelli-Chiesa \cite{lelli-chiesa:genealized_Lazarsfeld-Mukai_bundles}]
\label{thm:lelli}
Let $S$ be a K3 surface and $C \subset S$ a smooth irreducible curve
of genus $g \geq 2$ that is neither hyperelliptic nor trigonal.  Let
$A$ be a complete $g^r_d$ such that $r>1$, $d \leq g-1$, $\rho(g,r,d)
< 0$, and $\Cliff(A)=\Cliff(C)$.  Assume that there is no irreducible
genus 1 curve $E \subset S$ such that $E.C=4$ and no
irreducible genus 2 curve $B \subset S$ such that $B.C=6$.  Then $A$
is the restriction of a globally generated line bundle $L$ on $S$.
\end{theorem}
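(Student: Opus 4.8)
The plan is to apply the theory of Lazarsfeld--Mukai bundles, in the spirit of Lazarsfeld and of Donagi--Morrison, in the refined form developed by Lelli-Chiesa. After first reducing to the case that $A$ is base point free (replacing $A$ by its moving part and controlling the base locus by means of the equality $\Cliff(A)=\Cliff(C)$), I would attach to the pair $(C,A)$ its Lazarsfeld--Mukai bundle $\EE=\EE_{C,A}$, defined as the dual of the kernel in the evaluation sequence
\[
0 \to \EE\dual \to H^0(C,A) \tensor \OO_S \to \iota_* A \to 0,
\]
where $\iota\colon C \hookrightarrow S$ is the inclusion. Standard properties of this construction give that $\EE$ is a vector bundle of rank $r+1$ with $\det\EE \isom \OO_S(C)$, second Chern number $c_2(\EE)=d$, and $h^0(S,\EE)=h^0(C,A)+h^1(C,A)$.

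The first key step is to show that $\EE$ is not simple. Using the Mukai vector $v(\EE)=\bigl(r+1,\,[C],\,(r+1)+(g-1)-d\bigr)$ and the Riemann--Roch pairing on the K3 surface, a direct computation gives $\chi(\EE,\EE)=2-2\rho(g,r,d)$. Since $\rho(g,r,d)<0$ by hypothesis, we obtain $\chi(\EE,\EE)>2$; as $\operatorname{Ext}^2(\EE,\EE)\isom\operatorname{Hom}(\EE,\EE)\dual$ by Serre duality (recall $K_S$ is trivial), we have $\chi(\EE,\EE)=2\dim\operatorname{Hom}(\EE,\EE)-\dim\operatorname{Ext}^1(\EE,\EE)$, so $\dim\operatorname{Hom}(\EE,\EE)>1$ and $\EE$ carries a non-scalar endomorphism.

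Next I would turn this non-simplicity into geometry. A non-scalar endomorphism, together with the structure theory of Lazarsfeld--Mukai bundles, produces a saturated sub-line bundle $M\subset\EE$ fitting into an extension
\[
0 \to M \to \EE \to N \tensor \mathcal{I}_Z \to 0,
\]
with $N=\OO_S(C)\tensor M\dual$, a zero-dimensional subscheme $Z$, and $h^0(S,M),h^0(S,N)\geq 2$. Restricting to a general $C'\in|C|$ and invoking the resulting Clifford-index inequality (of Donagi--Morrison type) relates $\Cliff(M|_{C'})$ to $\Cliff(A)=\Cliff(C)$. The numerical hypotheses $r>1$, $d\leq g-1$, together with the minimality of $\Cliff(A)$, then constrain the intersection numbers $M^2$, $M.C$, $M.N$, $N^2$; in the generic configuration they force $M|_C$ (or its adjoint) to be $A$ itself, with $M$ globally generated, exhibiting $A$ as the restriction of $L=M$.

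The main obstacle, and the heart of Lelli-Chiesa's contribution, is the final classification ruling out the degenerate configurations in which the sub-line bundle restricts to a linear series genuinely different from $A$. Because $A$ has the generic Clifford index, the Green--Lazarsfeld argument is unavailable, and one must analyze precisely when $\EE$ splits off a destabilizing factor routed through a low-genus curve. These exceptional cases are exactly the Donagi--Morrison phenomena: they arise when $S$ carries an elliptic pencil $|E|$ with $E.C=4$, or an irreducible genus $2$ curve $B$ with $B.C=6$, so that $A$ is induced from a pencil on $E$ or $B$ and twisted rather than cut out by a line bundle on $S$. The two hypotheses of the theorem exclude precisely these configurations, so once they are eliminated the sub-line bundle $M$ must satisfy $M|_C\isom A$, and $L=M$ is the desired globally generated line bundle on $S$ whose restriction computes $A$.
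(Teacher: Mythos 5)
First, a point of comparison: the paper does not prove Theorem~\ref{thm:lelli} at all. It is imported verbatim from Lelli-Chiesa \cite{lelli-chiesa:genealized_Lazarsfeld-Mukai_bundles}, with Remark~\ref{rem:hyp} only recording two glosses on her work (the genus~1/genus~2 hypothesis is automatic once $\Cliff(C)>2$, and global generation of $L$ must be extracted from her proof of Thm.~4.2 rather than from her stated theorem). So there is no internal proof to measure you against; your sketch has to stand on its own as a reconstruction of Lelli-Chiesa's argument, and it does not.

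Your opening moves are fine: the reduction to base point free $A$, the Lazarsfeld--Mukai bundle $\EE=\EE_{C,A}$, and the computation $\chi(\EE,\EE)=2-2\rho(g,r,d)>2$ forcing non-simplicity are all correct and classical (this is Lazarsfeld's argument). The gap is the very next step. You claim that non-simplicity plus ``structure theory'' yields an exact sequence
$$
0 \to M \to \EE \to N \tensor \mathcal{I}_Z \to 0
$$
with $M$ and $N$ \emph{line bundles} and $Z$ zero-dimensional. This is impossible under the hypotheses of the theorem: since $r>1$, the bundle $\EE$ has rank $r+1\geq 3$, so the quotient by a sub-line bundle has rank $r\geq 2$ and can never be of the form $N\tensor\mathcal{I}_Z$. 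The decomposition you invoke is precisely the rank-two picture of Green--Lazarsfeld \cite{green_lazarsfeld:divisors_curves_K3_surface} and Donagi--Morrison \cite{donagi_morrison}, i.e., the case $r=1$ of pencils --- exactly the case the theorem excludes. The entire difficulty of Lelli-Chiesa's result lives in rank $\geq 3$, where non-simplicity gives only a non-scalar endomorphism and the resulting subsheaves have intermediate rank; her proof proceeds by induction on the rank using \emph{generalized} Lazarsfeld--Mukai bundles (where torsion in the quotient and non-locally-free behavior must be permitted), a stability analysis of the destabilizing pieces, and a reduction to a Donagi--Morrison-type statement, and it is out of that classification --- not out of a direct splitting of $\EE$ --- that the exceptional configurations with an elliptic curve $E$ with $E.C=4$ or a genus~2 curve $B$ with $B.C=6$ emerge. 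Your sketch compresses all of this into a single false assertion, and it also never establishes global generation of $L$, which (per Remark~\ref{rem:hyp}) is itself a nontrivial addendum requiring inspection of the construction rather than a consequence of ``the generic configuration.''
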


This result comes from an in-depth study of generalized
Lazarsfeld--Mukai bundles extending the original strategy of
\cite{green_lazarsfeld:divisors_curves_K3_surface}.

\begin{remark}
\label{rem:hyp}
According to
\cite[Thm.~4.2\textit{ff}.]{lelli-chiesa:genealized_Lazarsfeld-Mukai_bundles},
the hypothesis on curves of genus 1 and 2 is completely satisfied as
long as $\Cliff(C) > 2$; otherwise, there is a list of seven
exceptional cases when $\Cliff(C)=2$.  
We also remark that, according to the construction in the proof of
\cite[Thm.~4.2]{lelli-chiesa:genealized_Lazarsfeld-Mukai_bundles} (see
also \cite[Lemma~3.3]{lelli-chiesa:stability_rank3} and
\cite[Lemma~3.1]{green_lazarsfeld:divisors_curves_K3_surface}), the
line bundle $L$ can be chosen to be globally generated, though this is
not mentioned in the statement of the main theorem in
\cite{lelli-chiesa:genealized_Lazarsfeld-Mukai_bundles}.
\end{remark}

When a K3 surface has Picard rank one,
Lazarsfeld~\cite{lazarsfeld:Brill-Noether_without_degenerations} has
shown that the general curve in the linear system of the polarization
class is Brill--Noether general.  Hence Brill--Noether special K3
surfaces have higher Picard rank.

\subsection{Brill--Noether special K3 surfaces via lattice-polarizations}
\label{subsec:lattice_polarized_K3}

Let $\Sigma$ be an even nondegenerate lattice of signature
$(1,\rho-1)$ with a distinguished class $H$ of even norm $d > 0$.  A
\linedef{$\Sigma$-polarized} K3 surface is a polarized K3 surface
$(S,H)$ of degree $d$ together with a primitive isometric embedding
$\Sigma \hookrightarrow \Pic(S)$ preserving~$H$.  For a general
discussion of lattice-polarized K3 surfaces and their moduli, see
\cite{dolgachev:lattice_polarized_K3}. 
In particular, there exists a quasi-projective coarse moduli space
$\Kthree_\Sigma$ of dimension $20-\rho$ and a forgetful morphism
$\Kthree_\Sigma \to \Kthree_d$ to the moduli space of polarized K3
surfaces of degree~$d$. 
The main result of this section is the following characterization of
Brill--Noether special K3 surfaces of degree 14 via lattice
polarizations.  The same result is obtained by Greer, Li, and
Tian~\cite{greer_li_tian} using a different calculation.

\begin{theorem}
\label{thm:classify_K3}
If a polarized K3 surface $(S,H)$, with $H$ globally generated of
degree 14, is Brill--Noether special then it admits a lattice
polarization for one of the five rank 2 lattice appearing in
Table~\ref{tab:BN-special_K3} and $\gamma(S,H)$ is bounded above by
the corresponding value of $\gamma$ on the table.  
\end{theorem}

\begin{table}
$$
\begin{array}{||c||c||c||c||c||}
\hline\hline
\gamma=0
&
\gamma=1
&
\multicolumn{2}{c||}{\gamma=2}
&
\gamma=3\\\hline
\begin{minipage}{2.2cm}
\vspace{1mm}
$
\begin{array}{c|cc}
\multicolumn{1}{c}{}  & H  & E \\\cline{2-3}
H & 14 & 2 \\ 
E & 2  & 0
\end{array}
$\\[1.5mm]
\end{minipage}
&
\begin{minipage}{2.2cm}
\vspace{1mm}
$
\begin{array}{c|cc}
\multicolumn{1}{c}{}  & H  & E \\\cline{2-3}
H & 14 & 3 \\ 
E & 3  & 0
\end{array}
$
\\[1.5mm]
\end{minipage}
&
\begin{minipage}{2.2cm}
\vspace{1mm}
$
\begin{array}{c|cc}
\multicolumn{1}{c}{}  & H  & E \\\cline{2-3}
H & 14 & 4 \\ 
E & 4  & 0
\end{array}
$\\[1.5mm]
\end{minipage}
&
\begin{minipage}{2.2cm}
\vspace{1mm}
$
\begin{array}{c|cc}
\multicolumn{1}{c}{}  & H  & L \\\cline{2-3}
H & 14 & 6 \\ 
L & 6  & 2
\end{array}
$\\[1.5mm]
\end{minipage}
&
\begin{minipage}{2.2cm}
\vspace{1mm}
$
\begin{array}{c|cc}
\multicolumn{1}{c}{}  & H  & L \\\cline{2-3}
H & 14 & 7 \\ 
L & 7  & 2
\end{array}
$\\[1.5mm]
\end{minipage}
\\\hline
d_S=-4
&
d_S=-9
&
d_S=-16
&
d_S=-8
&
d_S=-21\\
d_S^0=-14
&
d_S^0=-14\cdot 9
&
d_S^0=-14\cdot 4
&
d_S^0=-14\cdot 2
&
d_S^0=-6\\\hline
(b,c)=(6,8)
&
(b,c)=(5,6)
&
(b,c)=(2,2)
&
(b,c)=(4,4)
&
(b,c)=(7,12)
\\\hline\hline
\end{array}
$$
\caption{Lattices embedded in Brill--Noether special K3 surfaces of
degree 14 and Clifford index $\gamma$. Here, $d$ and $d_0$ denote the
discriminants of the lattice and of $\langle H\rangle^\perp$,
respectively. The pair $(b,c)$ refers to the unique rank 3 cubic
fourfold lattice, whose associated lattice is the given one, normalized as in Proposition~\ref{prop:deg_14_lattice}.}
\label{tab:BN-special_K3}
\end{table}

To round out the classification, we remark that elliptic K3 surfaces
$(S,H)$ of degree 14 with a section are also Brill--Noether special.
These admit a lattice polarization with a class $E$ such that $E^2=0$
and $E.H=1$.  However, in this case, the linear system $|H|$ contains
a nontrivial fixed component in its base locus, and in particular,
there is no Clifford index defined.  Taken together with the five
lattices listed in Table~\ref{tab:BN-special_K3}, this shows that the
Brill--Noether special locus in $\Kthree_{14}$ is the union of six
Noether--Lefschetz divisors.

Before the proof of the Theorem~\ref{thm:classify_K3}, we need some
lemmas on elliptic pencils on K3 surfaces, which are mostly contained
in the work of Saint-Donat~\cite{saint-donat:projective_models_K3} and
Knutsen~\cite{knutsen:smooth_curves_K3_surfaces},
\cite{knutsen:gonality_Clifford_index}.  By an \linedef{elliptic
  pencil} we mean a line bundle $E$ on a K3 surface $S$ such that the
generic member of the linear system $|E|$ is a smooth genus one curve.
A result of
Saint-Donat~\cite[Prop.~2.6(ii)]{saint-donat:projective_models_K3}
says that if $E$ is generated by global sections and $E^2=0$, then $E$
is a multiple of an elliptic pencil.  If $E$ is an elliptic pencil
then $E$ is primitive in $\Pic(S)$ (cf. \cite[Ch.~2,
~Remark~3.13(i)]{huybrechts:K3_book}), $E^2=0$, $h^0(S,E)=2$, and
$h^1(S,E)=0$.

\begin{lemma}
\label{lem:compute_gonality}
Let $(S,H)$ be a polarized K3 surface of degree $2g-2 \geq 2$, let
$C \in |H|$ be a smooth irreducible curve, and let $E$ be a globally
generated line bundle on $S$ with $E^2=0$ and $E.C=d < 2g-2$.  Then
$E|_C$ is a $g^1_d$ if and only if $E$ is an elliptic pencil such that
$h^1(S,E(-C))=0$.
\end{lemma}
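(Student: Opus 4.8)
The plan is to compare global sections on $S$ and on $C$ through the restriction sequence, using Saint-Donat's structure theorem for globally generated line bundles of square zero as the main external input. First I would tensor $0 \to \OO_S(-C) \to \OO_S \to \OO_C \to 0$ by $E$ to get
$$
0 \to E(-C) \to E \to E|_C \to 0,
$$
so that $\deg(E|_C) = E.C = d$, and take the long exact cohomology sequence
$$
0 \to H^0(S,E(-C)) \to H^0(S,E) \to H^0(C,E|_C) \xrightarrow{\partial} H^1(S,E(-C)) \xrightarrow{\beta} H^1(S,E).
$$
The one genuinely geometric step is the vanishing $H^0(S,E(-C)) = 0$: since $H$ is ample and $(E-C).H = d - (2g-2) < 0$, the class $E-C$ cannot be effective (a nonzero effective class meets the ample $H$ positively, and $E-C=0$ would force $d = 2g-2$), so $E(-C)$ has no sections. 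Consequently $H^0(S,E) \hookrightarrow H^0(C,E|_C)$, and the whole computation reduces to controlling $h^0(S,E)$, $h^1(S,E)$, and the vanishing of $H^1(S,E(-C))$.

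Throughout I read ``$E|_C$ is a $g^1_d$'' as the condition $h^0(C,E|_C) = 2$, i.e.\ that $|E|_C|$ is a complete pencil; this is forced on us, since otherwise the inclusion above would make the condition hold automatically. Because $E$ is globally generated with $E^2=0$, Saint-Donat's result recalled above gives $E = kE_0$ for an elliptic pencil $E_0$ and some $k \geq 1$, and the $k+1$ monomials $s_0^{\,i}s_1^{\,k-i}$ in a basis $s_0,s_1$ of $H^0(S,E_0)$ are linearly independent, so $h^0(S,E) \geq k+1$. For the reverse implication, if $E$ is an elliptic pencil then by the properties recalled before the lemma $h^0(S,E)=2$ and $h^1(S,E)=0$; assuming in addition $h^1(S,E(-C))=0$, the exact sequence collapses to $0 \to H^0(S,E) \to H^0(C,E|_C) \to 0$, whence $h^0(C,E|_C)=2$ and $E|_C$ is a $g^1_d$.

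For the forward implication I would show that $h^0(C,E|_C)=2$ forces $k=1$ and then $h^1(S,E(-C))=0$. Indeed the injection $H^0(S,E) \hookrightarrow H^0(C,E|_C)$ gives $k+1 \leq h^0(S,E) \leq 2$, so $k=1$ and $E=E_0$ is an elliptic pencil with $h^0(S,E)=2$ and $h^1(S,E)=0$. Since $H^1(S,E)=0$, the map $\partial$ is surjective, so $h^0(C,E|_C) = h^0(S,E) + h^1(S,E(-C)) = 2 + h^1(S,E(-C))$; comparing with $h^0(C,E|_C)=2$ yields $h^1(S,E(-C))=0$. I expect the only real subtlety to be the bookkeeping around the meaning of $g^1_d$ (the requirement $h^0=2$, which is exactly what rules out the nonprimitive multiples $k \geq 2$); the rest is a routine restriction-sequence and Riemann--Roch computation on the K3 surface, pivoting on the vanishing $H^0(S,E(-C))=0$.
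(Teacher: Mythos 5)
Your proof is correct, and its skeleton matches the paper's: the same restriction sequence, the same vanishing $H^0(S,E(-C))=0$ (you argue via ampleness of $H$, the paper via the nef class $C$ itself, citing Knutsen; both work), Saint-Donat's structure theorem $E=kE_0$, and the identical argument for the ``if'' direction. Where you genuinely diverge is the ``only if'' direction, and your route is simpler. The paper works from the tail of the long exact sequence: it uses Serre duality to rewrite $H^2(S,E(-C))$ as $H^0(S,E(-C)^{\vee})^{\vee}$, bounds $h^0(S,E(-C)^{\vee})$ above by $h^1(C,E|_C)=2-(d-g+1)$ via Riemann--Roch on $C$, compares with Riemann--Roch on $S$ to force $h^1(S,E(-C))=0$, and only then reads off $h^0(S,E)=h^0(C,E|_C)=2=k+1$ from the head of the sequence to conclude $k=1$. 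You reverse the order of deductions: the injection $H^0(S,E)\hookrightarrow H^0(C,E|_C)$ together with the elementary bound $h^0(S,kE_0)\geq k+1$ (your monomial argument) pins down $k=1$ at once; then the standard vanishing $h^1(S,E)=0$ for elliptic pencils (recorded in the paper just before the lemma) makes the connecting map surjective, so $h^1(S,E(-C))=h^0(C,E|_C)-h^0(S,E)=0$ by a pure dimension count. This avoids Serre duality and Riemann--Roch entirely, at no real cost. You also make explicit a point the paper leaves implicit: ``$E|_C$ is a $g^1_d$'' must be read as the complete condition $h^0(C,E|_C)=2$, since under the weak reading ($h^0\geq 2$) the forward implication fails for $E=kE_0$ with $k\geq 2$; the paper invokes this completeness silently when it writes $h^0(S,E)=h^0(C,E|_C)=2$ ``since $E|_C$ is a $g^1_d$.''
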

\begin{proof}
First remark that since $H$ is globally generated and $(E-C).C = d -
(2g-2) <0$ by hypothesis, we get that $h^0(S,E(-C))=0$, cf.\
\cite[Proof~of~Prop.~2.1]{knutsen:gonality_Clifford_index}.

Now, assume that $E$ is an elliptic pencil and that $h^1(S,E(-C))=0$.
Then the long exact sequence in cohomology associated to the exact
sequence of sheaves
$$
0 \to E(-C) \to E \to E|_C \to 0
$$
together with the fact that $h^0(S,E)=2$, implies that
$h^0(C,E|_C)=2$.  Since $\deg(E|_C) = E.C = d$, we have that $E|_C$ is
a $g^1_d$.

Now assume that $E|_C$ is a $g^1_d$.  By
Saint-Donat~\cite[Prop.~2.6(ii)]{saint-donat:projective_models_K3},
$E=F^{\tensor k}$ for an elliptic pencil $F$ and some $k\geq 1$
dividing $d$.  Again considering the same long exact sequence as
above, the last terms, when rewritten using Serre duality and the fact
that $H^0(S,E\dual)=0$ since $E$ is effective, read
$$
H^1(S,E|_C) \to H^0(S,E(-C)\dual) \to 0.
$$
By Riemann--Roch on $C$, we have $h^1(S,E|_C) = h^1(C,E|_C) =
2-(d-g+1)$, hence $h^0(S,E(-C)\dual) \leq 2-(d-g+1)$.  By
Riemann--Roch on $S$, we have
$$
h^0(S,E(-C)\dual)-h^1(E(-C)) = 2 + \frac{1}{2}(E-C)^2 = 2 +
\frac{1}{2}(-2d+2g-2) = 2 - (d-g+1),
$$
using Serre duality and the fact that $H^0(S,E(-C))=0$, hence
$h^0(S,E(-C)\dual) \geq 2-(d-g+1)$ and $h^1(S,E(-C))=0$.  However, the
beginning terms of the long exact sequence read
$$
0 \to H^0(S,E) \to H^0(S,E|_C) \to H^1(E(-C))
$$
implying that $h^0(S,E)=h^0(C,E|_C)=2$ (since $E|_C$ is a $g^1_e$).  But
$h^0(S,E)=k+1$ and thus we conclude that $k=1$, i.e., $E$ is an
elliptic pencil.
\end{proof}

\begin{proof}[Proof of Theorem~\ref{thm:classify_K3}]
Let $C \subset S$ be a smooth irreducible curve (of genus 8) in
the linear system of $H$.
We argue by the Clifford index of $(S,H)$, equivalently, of $C$.

If $\Cliff(C)=0$, i.e., $C$ is hyperelliptic by Clifford's Theorem,
then by Saint-Donat \cite[Thm.~5.2]{saint-donat:projective_models_K3}
(cf.\ Reid \cite[Prop.~3.1]{reid:hyperelliptic_linear_systems_K3}),
the $g^1_2$ on $C$ is the restriction of an elliptic pencil $E$ such
that $E.H=2$.  

If $\Cliff(C)=1$, i.e., $C$ is trigonal, then by
Reid~\cite[Thm.~1]{reid:special_linear_systems_curves_K3} (cf.\
\cite[Thm.~7.2]{saint-donat:projective_models_K3}), after verifying $8
> \frac{1}{4}3^2+3+2$, the $g^1_3$ on $C$ is the restriction of an
elliptic pencil $E$ such that $E.H=3$.  

In these first two cases, the sublattice of $\Pic(S)$ generated by $H$
and $E$ is primitive.  Indeed, if not, then this sublattice admits a
finite index overlattice contained in $\Pic(S)$.  However, using the
correspondence between finite index overlattices and isotropic subgroups of
the discriminant form (cf., Nikulin~\cite[\S1.4]{nikulin}), we find
that, in this case, the only finite index overlattice would admit
a class $F \in \Pic(S)$, where $e F = E$, for $e=2$ or $3$,
respectively.  However, as $E$ is an elliptic pencil on $S$, it is a
primitive class in $\Pic(S)$, hence no such overlattice exists.

If $\Cliff(C)=2$, then by
Green--Lazarsfeld~\cite{green_lazarsfeld:divisors_curves_K3_surface}
(since the generic value of the Clifford index is 3, see
Theorem~\ref{thm:green_laz}), there is a line bundle $L$ on $S$ such
that $L|_C$ is a $g^1_4$ or a $g^2_6$.  Then $L.H = \deg(L|_C) = 4$ or
$6$, respectively.  Furthermore, by a result of
Knutsen~\cite[Lemma~8.3]{knutsen:kth_order}, we can choose $L$
satisfying
$$
0 \leq L^2 \leq 4 \quad\text{and}\quad 2L^2 \leq L.H
\quad\text{and}\quad 2 = L.H - L^2 - 2
$$
with $L^2=4$ or $2L^2 = L.H$ if and only if $H=2L$.  However, since
$14$ is squarefree, $H=2L$ is impossible, hence the only possibilities
are that $L^2=0$ and $L.H=4$, $L^2=0$ and $L.H=6$, or $L^2=2$ and
${L.H=6}$.  As a consequence of Martens' proof
\cite{martens:curves_on_K3_surfaces} of the main result of
\cite{green_lazarsfeld:divisors_curves_K3_surface} (cf.\ proof of
\cite[Lemma~8.3]{knutsen:kth_order}), we can also choose $L$ generated
by global sections and with $h^1(S,L(-C))=0$.  Suggestively, in the
two former cases, we denote $L$ by $E$.

We now argue that the case $E^2=0$ and $E.H=6$ is impossible.  First
assume that $E$ is an elliptic pencil.
Lemma~\ref{lem:compute_gonality} then implies that $E|_C$ is a
$g^1_6$, contradicting the assumption that it is a $g^2_6$.  Hence $E$
cannot be an elliptic pencil.  Thus by the result of Saint-Donat
mentioned above, $E = k F$ for $k=2,3,6$ and an elliptic pencil $F$.
The case $k=6$ is impossible, since $F^2=0$ and $F.H=1$ contradicts
the ampleness of $H$.  For $k=2,3$, we have $F^2=0$ and
$F.H=6/k \leq 3$, so that results of
Saint-Donat~\cite[Prop.~5.2,~7.15]{saint-donat:projective_models_K3}
imply that $F|_C$ is a $g^1_{6/k}$, contradicting the fact that
$\gamma(C)=2$.

In the remaining two cases, we argue that the sublattice of $\Pic(S)$
generated by $H$ and $E$ (resp.\ $H$ and $L$) is primitive.  As
before, we appeal to the correspondence between finite index overlattices
and isotropic subgroups of the discriminant form (cf.,
Nikulin~\cite[\S1.4]{nikulin}).  In the case $E^2=0$ and $E.H=4$, the
only finite overlattice would contain a class dividing $E$, however
since $E|_C$ must be a $g^1_4$, then by
Lemma~\ref{lem:compute_gonality}, $E$ is an elliptic pencil and is
thus a primitive class in $\Pic(S)$.  Hence, the sublattice generated
by $H$ and $E$ is primitive.  In the case $L^2=2$ and $L.H=6$, the
only finite index overlattice would contain a class
$F \in \Pic(S)$ such that $2F = H-L$, however, such $F$ would then
satisfy $F^2 = (H-L)^2/4 = 1$, which is impossible since $\Pic(S)$ is
an even lattice.  Hence, the sublattice generated by $H$ and $L$ is
primitive.

Finally, assume that $\Cliff(C) = 3$.  Then all the hypotheses of the
results of
Lelli-Chiesa~\cite{lelli-chiesa:genealized_Lazarsfeld-Mukai_bundles}
(see Theorem~\ref{thm:lelli}) are satisfied, hence there exists a line
bundle $L$ on $S$ such that $L|_C$ is a $g^2_7$.  In particular, $L.C
= \deg(L|_C) = 7$.  As before, by Remark~\ref{rem:hyp}, $L$ can be
chosen to be globally generated, so that $2n=L^2 \geq 0$.
Furthermore, by \cite[Prop.~10.5]{johnsen_knutsen:K3}, we can choose
$L$ so that $L^2=2$.  The sublattice of $\Pic(S)$ generated by $H$ and
$L$ is then primitive since its discriminant is squarefree.

Thus in each case, the polarized K3 surface $(S,H)$ has a
lattice-polarization with respect to one of the lattices on
Table~\ref{tab:BN-special_K3}.  
\end{proof}

\begin{remark}
\label{rem:g15}
Every smooth curve $C$ of genus 8 contains a finite number of $g^1_5$
divisors.  If $\gamma(C)=3$ and $C$ lies on a K3 surface $S$ with a
primitive degree 14 polarization $H$, then it could happen that none
of the $g^1_5$ divisors are the restriction of a line bundle from $S$
(e.g., the corresponding Lazarsfeld--Mukai bundles are simple).
However, if a $g^1_5$ is the restriction of a line bundle on $S$,
then arguing as in the proof of Theorem~\ref{thm:classify_K3}, one can
verify that the Picard lattice of $S$ 
admits a primitive sublattice 
generated by $H$ and $E$,
where $E$ is an elliptic pencil such that $H.E=5$ and $E|_C$ is the
$g^1_5$.
\end{remark}

\section{Lattice polarized cubic fourfolds}
\label{sec:Some_lattice_polarized_cubic_fourfolds}

Let $X$ be a smooth cubic fourfold and let $A(X)$ denote the lattice
of codimension 2 algebraic cycles $\CH^2(X)$ with its usual
intersection form.  Then via the cycle class map, $A(X)$ is isomorphic
to $H^4(X,\Z) \cap H^{2,2}(X)$ by the validity of the integral Hodge
conjecture for cubic fourfolds proved by
Voisin~\cite{voisin:aspects_Hodge_conjecture}.

Given a positive definite lattice $\Lambda$ containing a distinguished
element $h^2$ of norm $3$, a \linedef{$\Lambda$-polarized} cubic
fourfold is a cubic fourfold $X$ together with the data of a primitive
isometric embedding $\Lambda \hookrightarrow A(X)$ preserving $h^2$.
The main results of Looijenga~\cite{looijenga:period_cubic_fourfold}
and Laza~\cite{laza:period_cubic_fourfold} on the description of the
period map for cubic fourfolds imply that smooth $\Lambda$-polarized
cubic fourfolds exist if and only if~$\Lambda$ admits a primitive
embedding into
$H^4(X,\Z) = \langle 1 \rangle^{\oplus 21} \oplus \langle -1
\rangle^{\oplus 2}$ and $\Lambda$ contains no short roots (i.e.,
elements $v \in \Lambda$ with norm 2 such that $v.h^2=0$) nor long
roots (i.e., elements $v \in \Lambda$ with norm 6 such that $v.h^2=0$
and $v.\langle h^2 \rangle^{\perp} \subset 3\Z$ where we compute
$\langle h^2 \rangle^{\perp} \subset H^4(X,\Z)$).  We call any such
lattice $\Lambda$ a \linedef{cubic fourfold lattice}.  We remark that
the conditions defining short and long roots can be checked completely
within the lattice $\Lambda$: for short roots, this is clear; for long
roots $v \in \Lambda$, the conditions are equivalent to $v.v=6$,
$v.h^2=0$, and $v \pm h^2$ is divisible by 3 in $\Lambda$.

For a cubic fourfold lattice $\Lambda$ of rank $\rho$, an adaptation
of the argument of Hassett~\cite[Thm.~3.1.2]{hassett:special} (see
also \cite[\S2.3]{hassett:cubic_survey}) proves that the moduli space
$\cC_{\Lambda}$ of $\Lambda$-polarized cubic fourfolds is a
quasi-projective variety of dimension $21-\rho$.  There is a forgetful
map $\cC_{\Lambda} \to \cC$, whose image we denote by
$\cC_{[\Lambda]}$.  In other words, $\cC_{[\Lambda]} \subset \cC$ is
the locus of cubic fourfolds $X$ such that $A(X)$ admits a primitive
isometric embedding of $\Lambda$ preserving $h^2$.  We remark that the
forgetful map $\cC_\Lambda \to \cC_{[\Lambda]}$ is generically finite
to one, and whose degree depends on the number of automorphisms of
$\Lambda$ fixing~$h^2$.

The possible rank 2 cubic fourfold lattices were classified by
Hassett~\cite{hassett:special}; such a lattice $K_d$ is uniquely
determined by its discriminant, which can be any number $d > 6$ such
that $d \equiv 0,2 \pmod 6$.  Then $\cC_{K_d}$ coincides with the
moduli space $\cC_d^\mar$ of \linedef{marked} special cubic fourfolds
of discriminant $d$ considered by Hassett
\cite[\S5.2]{hassett:special} and $\cC_{[K_d]}$ coincides with the
Noether--Lefschetz divisor $\cC_d \subset \cC$.
For cubic fourfold lattices $\Lambda$ of rank 3, the loci
$\cC_{[\Lambda]}$ were considered in \cite{addington_thomas},
\cite{ABBV:pfaffian}, \cite{bolognesi_russo}, \cite{galluzzi},
\cite{tregub:rationality_cubic_fourfold},
\cite{tregub:remarks_cubic_fourfolds}.

Given a primitive embedding $\Lambda \hookrightarrow \Lambda'$ of
cubic fourfold lattices preserving $h^2$, there is an induced morphism
$\cC_{\Lambda'} \to \cC_{\Lambda}$ and an inclusion of
subvarieties $\cC_{[\Lambda']} \subset \cC_{[\Lambda]}$. 
In particular, we have that $\cC_{[\Lambda]} \subset \cC_d$ whenever
$\Lambda$ admits a primitive embedding of~$K_d$ preserving $h^2$.

When $\Lambda=\Pi$ is the lattice with Gram matrix
\begin{equation}
\label{eq:Pi}
\begin{array}{c|ccc}
\multicolumn{1}{c}{}   &h^2& T & P \\\cline{2-4}
h^2 & 3 & 4 & 1 \\
T   & 4 & 10 & -1 \\
P   & 1 & -1 & 3
\end{array}
\quad
\isom
\quad
\begin{array}{c|ccc}
\multicolumn{1}{c}{}   &h^2& P & P' \\\cline{2-4}
h^2 & 3 & 1 & 1 \\
P   & 1 & 3 & 0 \\
P'  & 1 & 0 & 3
\end{array}
\end{equation}
with the isomorphism defined by $T = 2h^2-P-P'$, then $\cC_{[\Pi]}$ is
one of the most well-studied codimension $2$ loci in the moduli space
of cubic fourfolds, cf.\ \cite{fano:cubic},
\cite{tregub:rationality_cubic_fourfold},
\cite[\S3,~App.]{voisin:cubic_fourfolds}.

\begin{prop}
\label{prop:Pi}
The subvariety $\cC_{[\Pi]}\subset \cC$ is an irreducible component of
$\cC_8 \cap \cC_{14}$ and coincides with the locus of cubic fourfolds
that contain disjoint planes.  
\end{prop}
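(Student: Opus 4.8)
\emph{Plan.} The idea is to play the lattice-theoretic description $\cC_{[\Pi]}$ against the geometric locus $\cC_\Pi$ of cubic fourfolds containing two disjoint planes, proving first that the two coincide and then that the common locus is a component of $\cC_8\cap\cC_{14}$. Working in the basis $\{h^2,P,P'\}$ on the right of \eqref{eq:Pi}, I first record the two sublattices responsible for the containment into the intersection. The sublattice $\langle h^2,P\rangle$ has Gram matrix $\left(\begin{smallmatrix}3&1\\1&3\end{smallmatrix}\right)$ of discriminant $8$, hence is isometric to $K_8$; and, with $T=2h^2-P-P'$, the sublattice $\langle h^2,T\rangle$ has Gram matrix $\left(\begin{smallmatrix}3&4\\4&10\end{smallmatrix}\right)$ of discriminant $14$, hence is isometric to $K_{14}$. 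Each is saturated in $\Pi$ (the quotient is free of rank one) and preserves $h^2$, so the inclusion principle recorded above gives $\cC_{[\Pi]}\subseteq\cC_8\cap\cC_{14}$; moreover $\dim\cC_{[\Pi]}=21-3=18$ by Hassett's dimension formula.

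Next I would prove $\cC_\Pi=\cC_{[\Pi]}$ by establishing both inclusions. For $\cC_\Pi\subseteq\cC_{[\Pi]}$, let $X$ contain two disjoint planes $\Pi_1,\Pi_2$. A plane in a cubic fourfold has $[\Pi_i]^2=3$ and $h^2\cdot[\Pi_i]=1$, and disjointness forces $[\Pi_1]\cdot[\Pi_2]=0$, so the sublattice $\langle h^2,[\Pi_1],[\Pi_2]\rangle\subseteq A(X)$ is isometric to $\Pi$ preserving $h^2$ (in particular $\Pi$ is a cubic fourfold lattice). Since $\disc\Pi=21$ is squarefree, $\Pi$ admits no proper finite-index overlattice, so this embedding is automatically primitive and $X\in\cC_{[\Pi]}$.

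For the reverse inclusion $\cC_{[\Pi]}\subseteq\cC_\Pi$, take $X\in\cC_{[\Pi]}$. Restricting the $\Pi$-marking to $\langle h^2,P\rangle\isom K_8$ and to $\langle h^2,P'\rangle\isom K_8$ exhibits $X$ in $\cC_8$ in two ways, and Voisin's identification of $\cC_8$ with the locus of cubic fourfolds containing a plane --- in its marked form, realizing the labelled plane class --- produces planes $\Pi_1,\Pi_2\subset X$ with $[\Pi_1]=P$ and $[\Pi_2]=P'$. It then remains to see that $\Pi_1$ and $\Pi_2$ are disjoint, and here I would compute $[\Pi_1]\cdot[\Pi_2]$ for each configuration of two distinct planes in $\P^5$: a transverse meeting in a point contributes $+1$, while a meeting along a line $\ell$ is an excess intersection contributing $\deg c_1(E)$ for the rank-one excess bundle $E=N_{\Pi_1/X}|_\ell/N_{\ell/\Pi_2}$; using $c_1(N_{\Pi/X})=0$ (from the normal bundle sequence $0\to N_{\Pi/X}\to\OO(1)^{\oplus3}\to\OO(3)\to0$) and $\deg N_{\ell/\Pi_2}=1$, this contribution is $-1$. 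As $P\cdot P'=0$, neither meeting can occur, so $\Pi_1$ and $\Pi_2$ are disjoint and $X\in\cC_\Pi$. This reverse inclusion --- passing from the Hodge-theoretic datum back to honest disjoint planes --- is the main obstacle, resting on the marked form of Voisin's theorem and on the intersection-number dichotomy that separates disjoint planes ($0$) from planes meeting in a point ($+1$) or along a line ($-1$).

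Finally, $\cC_\Pi$ is irreducible: ordered pairs of disjoint planes form a single $PGL_6$-orbit, and the smooth cubics through a fixed such pair form an open subset of a projective space, so the resulting family, and hence its image $\cC_\Pi$ in $\cC$, is irreducible (this is Tregub's computation, giving codimension $2$). Thus $\cC_{[\Pi]}=\cC_\Pi$ is irreducible, closed, and of dimension $18$. Since $\cC_8$ and $\cC_{14}$ are distinct irreducible divisors in the $20$-dimensional $\cC$, their intersection $\cC_8\cap\cC_{14}$ is pure of dimension $18$; an irreducible closed subvariety of dimension $18$ lying in it must therefore be one of its irreducible components, which completes the plan.
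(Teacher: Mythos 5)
Your proposal is correct in substance, but it takes a genuinely different route from the paper: the paper's proof of this proposition is purely by citation, quoting the component statement from \cite[Thm.~4]{ABBV:pfaffian} and the disjoint-planes statement from the proof in Voisin's appendix together with Hassett's refinement, whereas you reconstruct a self-contained argument. Your reconstruction is essentially what those references contain, and its ingredients check out: $\langle h^2,P\rangle\isom K_8$ and $\langle h^2,T\rangle\isom K_{14}$ are saturated in $\Pi$; primitivity of $\Pi\hookrightarrow A(X)$ is automatic because $\disc\Pi=21$ is squarefree; the excess-intersection dichotomy (disjoint $=0$, point $=+1$, line $=-1$, using $c_1(N_{\Pi/X})=0$) correctly rules out any incidence of the two planes; and the purity argument is valid since $\cC$ has only finite quotient singularities, so two distinct irreducible divisors meet in pure codimension $2$. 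What your route buys is an explicit mechanism, in particular the intersection-number argument for disjointness, which the paper leaves implicit in its citations. Two points, however, need more care than your sketch gives them. First, the step producing planes with $[\Pi_1]=P$ and $[\Pi_2]=P'$ is not a formal consequence of the unmarked statement ``$\cC_8$ is the locus of cubics containing a plane'': since $A(X)$ can contain several classes of norm $3$ and degree $1$ (here both $P$ and $P'$), one must prove the marked refinement, namely that the locus in the connected marked moduli space $\cC_{8}^{\mar}$ where the distinguished class is realized by an honest plane is both closed (properness of the Hilbert scheme of planes) and open (deformation of the plane along the Noether--Lefschetz locus, via semiregularity), hence all of $\cC_8^{\mar}$. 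You correctly flag this as the main obstacle, and it is exactly the content of the Voisin-proof-plus-Hassett-refinement that the paper cites, but it is a deformation-theoretic argument, not bookkeeping. Second, you assert without proof that $\cC_{[\Pi]}$ is closed in $\cC$, which is needed for it to literally \emph{be} an irreducible component rather than merely dense in one; this does hold, because the classes spanning $\Pi$ remain Hodge (hence algebraic) under specialization in a family of smooth cubics, and primitivity of the specialized embedding is again forced by the squarefree discriminant, but a complete write-up should say so.
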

\begin{proof}
The proof of the first statement is in \cite[Thm.~4]{ABBV:pfaffian},
cf.\ \cite{fano:cubic}, \cite{tregub:rationality_cubic_fourfold}.  The
existence of two disjoint planes follows from the proof given in
Voisin~\cite[\S3,~App.,~Prop.]{voisin:cubic_fourfolds} and the
refinement due to Hassett~\cite[\S3]{hassett:special}.  
\end{proof}

Fix an \linedef{admissible discriminant} $d>6$, i.e., such that
$d \equiv 0,2 \bmod 6$ and such that $4 \nmid d$, $9 \nmid d$, and
$p \nmid d$ for any odd prime $p \equiv 2 \bmod 3$.
Hassett~\cite[\S5]{hassett:special} proves that for any cubic fourfold
$X$ with a marking of discriminant $d$, the orthogonal complement
$K_d^\perp$ of $K_d$ inside $H^4(X,\Z)$ is Hodge isometric to a twist
$\Pic(S)_0(-1)$ of the primitive cohomology lattice of a polarized K3
surface $(S,H)$ of degree $d$, and that such a Hodge-theoretic
association gives rise to a choice of open immersion
$\cC_{K_d} = \cC_d^\mar \hookrightarrow \Kthree_d$ of moduli spaces
(cf.\ \cite[Corollary~5.2.4]{hassett:special}).  The choice of such an
open immersion is determined by an isomorphism between the
discriminant forms of the abstract lattices $K_d^\perp$ and
$\Pic(S)_0(-1)$, modulo scaling by $\{\pm 1\}$; there are $2^{r-1}$
such choices, where $r$ is the number of distinct odd primes dividing
$d$, see \cite[Corollary~5.2.4]{hassett:special},
\cite[Proposition~26]{hassett:cubic_survey}.

Now, given a cubic fourfold lattice $\Lambda$ and a fixed primitive
embedding $K_d \hookrightarrow \Lambda$ preserving $h^2$, we are
interested in generalizing this open immersion to $\Lambda$-polarized
cubic fourfolds.  We can do this explicitly in the case of interest to
us, namely when $d=14$ and the rank of $\Lambda$ is 3, due to the
following lemma.

\begin{lemma}
  \label{lem:sigma}
  Let $\Lambda$ be a rank 3 cubic fourfold lattice with a fixed
  primitive embedding $K_{14} \hookrightarrow \Lambda$ preserving $h^2$.
  Then, up to isometry, there is a unique rank 2 even indefinite
  lattice $\sigma(\Lambda)$ with discriminant $-d(\Lambda)$, a
  distinguished class $H$ of norm $d$, and such that the orthogonal
  complement of $K_{14}$ in $\Lambda$ is isometric (up to twist) with the
  orthogonal complement of $H$ in $\sigma(\Lambda)$.
\end{lemma}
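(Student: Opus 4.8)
My plan is to construct $\sigma(\Lambda)$ inside the K3 lattice by transporting the rank $1$ orthogonal complement of $K_{14}$ across Hassett's association and then saturating. Write $M$ for the orthogonal complement of $K_{14}$ inside $H^4(X,\Z)$ and $R$ for its orthogonal complement inside $\Lambda$, so that $R = \Lambda \cap M$. Since $K_{14}\hookrightarrow\Lambda$ is primitive and $\Lambda$ is positive definite of rank $3$, the lattice $R$ is a saturated positive definite rank $1$ sublattice of $\Lambda$, hence also of $M$ (if $kv \in R$ for $v \in M$ then $kv \in \Lambda$, so $v \in \Lambda$ by primitivity of $\Lambda$ in $H^4(X,\Z)$, whence $v \in \Lambda\cap M = R$). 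Because $R \perp h^2$ and the primitive cohomology $\langle h^2\rangle^\perp\subset H^4(X,\Z)$ is an even lattice, $R$ is even; write $R = \langle w\rangle$ with $w^2 = n > 0$ even.

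The key input is Hassett's association for $d = 14$: at the level of abstract lattices it supplies an isometry $\Theta\colon M\isom N(-1)$, where $N = \langle H\rangle^\perp$ is the even, signature $(2,19)$, orthogonal complement of a primitive degree $14$ class $H$ in the K3 lattice $\Lambda_{\mathrm{K3}} = H^2(S,\Z)$; as $14$ has a single odd prime divisor, this association is unique up to sign. I set $R' = \Theta(R)\subseteq N\subseteq\Lambda_{\mathrm{K3}}$, a saturated rank $1$ sublattice with $R'\isom R(-1) = \langle -n\rangle$, and define $\sigma(\Lambda)$ to be the saturation of $\langle H\rangle\oplus R'$ in $\Lambda_{\mathrm{K3}}$. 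It is then immediate that $\sigma(\Lambda)$ is an even rank $2$ lattice of signature $(1,1)$ containing $H$ as a primitive vector of norm $14$. To identify $\langle H\rangle^\perp$ in $\sigma(\Lambda)$, I would note that any vector of $\sigma(\Lambda)$ orthogonal to $H$ lies in $N$, and that a short saturation argument (using that $R'$ is saturated in $N$) gives $\langle H\rangle^\perp_{\sigma(\Lambda)} = R' \isom R(-1)$, which is exactly the twist of $K_{14}^\perp = R$ required by the statement.

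For the discriminant I would exploit unimodularity of the two ambient lattices. Inside $H^4(X,\Z)$ one has $\Lambda^\perp = M\cap R^\perp$, and inside $\Lambda_{\mathrm{K3}}$ one has $\sigma(\Lambda)^\perp = N\cap (R')^\perp$; since $\Theta$ carries $M\cap R^\perp$ isometrically onto $\bigl(N\cap (R')^\perp\bigr)(-1)$, and since primitive sublattices of a unimodular lattice have the same discriminant magnitude as their complements, the chain $d(\Lambda) = |\disc\Lambda| = |\disc(M\cap R^\perp)| = |\disc(N\cap (R')^\perp)| = |\disc\sigma(\Lambda)|$ holds. The signature $(1,1)$ then forces $\disc\sigma(\Lambda) = -d(\Lambda)$. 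This computation is what frees me from tracking the gluing index directly.

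Finally, for uniqueness I would argue with Gram matrices: any lattice meeting the conditions has a basis in which its form is $\left(\begin{smallmatrix} 14 & t \\ t & s\end{smallmatrix}\right)$ with $14s - t^2 = -d(\Lambda)$ and $s$ even, and the only freedom fixing $H$ is $f\mapsto f+kH$ and $f\mapsto -f$, which alters $t$ only by multiples of $14$ and by sign; the constraints that $\langle H\rangle^\perp$ have the prescribed norm $-n$ and that the discriminant equal $-d(\Lambda)$ then pin down $t$ modulo $14$ up to sign. The main obstacle is precisely this uniqueness: because $\sigma(\Lambda)$ has rank $2$, Nikulin's uniqueness theorem for indefinite lattices does not apply, so one cannot simply invoke that the genus is a single class. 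Instead one must use the rigidity coming from the cyclicity of the discriminant group $\Z/14$ — equivalently, that $14$ has a single odd prime factor — to conclude that the gluing datum, and hence the isometry class of $\sigma(\Lambda)$, is uniquely determined.
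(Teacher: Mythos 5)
Your proposal is correct, but the existence part takes a genuinely different route from the paper. The paper's proof is entirely elementary and self-contained: it normalizes a basis $h^2, T, J$ of $\Lambda$ with $h^2.J=0$, observes that $d(\Lambda) = -3b^2+14c \equiv (5b)^2 \pmod{14}$ is a square modulo $14$, extracts $0 \leq \alpha \leq 7$ and an \emph{even} $\beta$ with $d(\Lambda) = \alpha^2 - 14\beta$ (evenness coming from evenness of $\langle h^2\rangle^\perp$), defines $\sigma(\Lambda)$ by the explicit Gram matrix $\left(\begin{smallmatrix} 14 & \alpha \\ \alpha & \beta \end{smallmatrix}\right)$, and verifies the complement condition by writing down the generators $(4b h^2 - 3b T + 14 J)/\!\gcd(b,14)$ and $(\alpha H - 14 L)/\!\gcd(\alpha,14)$ and comparing norms via $\gcd(b,14)=\gcd(\alpha,14)$. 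You instead transport $R = K_{14}^\perp \cap \Lambda$ across Hassett's abstract isometry between $K_{14}^\perp \subset H^4(X,\Z)$ and the twisted primitive cohomology lattice of a degree $14$ polarized K3 surface (the result of \cite[\S5]{hassett:special} quoted just before the lemma), saturate $\langle H \rangle \oplus R'$ inside the K3 lattice, and obtain the discriminant from unimodularity of the two ambient lattices together with Nikulin's fact that a primitive sublattice and its complement have equal discriminant magnitude; your verifications (saturation of $R$ in $M$, identification of $\langle H\rangle^\perp_{\sigma(\Lambda)}$ with $R'$, the sign forced by signature $(1,1)$) are all sound. What your route buys is conceptual clarity: $\sigma(\Lambda)$ is by construction the candidate Picard lattice of the Hodge-theoretically associated K3 surface, so the content of Proposition~\ref{prop:marked_embedding} is essentially built into the definition and the appearance of the twist is explained. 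What it costs is that it leans on Hassett's nontrivial isometry where the paper needs nothing beyond $2\times 2$ and $3\times 3$ determinants, and it produces no explicit Gram matrix — whereas the paper's $(\alpha,\beta)$ algorithm is exactly what is used afterwards to compute $\sigma(\Pi)$ in \eqref{eq:hPi}, the $(b,c)$ row of Table~\ref{tab:BN-special_K3}, and the formulas following Proposition~\ref{prop:deg_14_lattice}. The uniqueness argument is the same in both proofs: normalize $t$ into $[0,7]$ by translation and reflection, then use that the solutions of $t^2 \equiv d(\Lambda) \pmod{14}$ form a single $\pm$-orbit.

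One point of imprecision, not fatal: your closing appeal to ``the cyclicity of the discriminant group $\Z/14$'' is misphrased, since the discriminant group of $\sigma(\Lambda)$ has order $d(\Lambda)$, not $14$. The fact actually needed — which your Gram-matrix normalization already supplies — is the statement about square roots of $d(\Lambda)$ modulo $14 = H^2$: because $14 \equiv 2 \pmod 4$ and has a single odd prime factor, these roots form one orbit under $t \mapsto -t$, $t \mapsto t + 14$, so $t$ (hence $s$) is pinned down. This is indeed the same rigidity behind the uniqueness of Hassett's open immersion for $d=14$ (the count $2^{r-1}$ with $r=1$), as you observe, and your remark that Nikulin-type uniqueness for indefinite lattices is unavailable in rank $2$ correctly identifies why an explicit argument is required at all.
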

\begin{proof}
As the sublattice
$K_{14} = \langle h^2,T \rangle \subset \Lambda$ is primitive, there exists a
class $J \in \Lambda$ and integers $a,b,c$ such that
$$
\begin{array}{c|ccc}
\multicolumn{1}{c}{}   &h^2& T & J \\\cline{2-4}
h^2 & 3 & 4 & a \\
T   & 4 & 10 & b \\
J   & a & b & c
\end{array}
$$
By translating $J$ to $J-a(T-h^2)$, we can assume that $a=0$.
Directly computing the determinant of this Gram matrix, we then find
that $d(\Lambda) = -3b^2 + 14c \equiv (5b)^2$ is a square modulo 14.
Let $0 \leq \alpha \leq 7$ be such that $\alpha^2 \equiv d(\Lambda)$
modulo 14.  Then we can write $d(\Lambda) = \alpha^2 - 14 \beta$ for
some integer $\beta$.  Now we argue that $\beta$ is even.  Since $J$
is orthogonal to $h^2$ and $\langle h^2 \rangle^\perp$ is an even
lattice, we have that $J^2=c$ must be even.  Thus
$d(\Lambda) \equiv 0, 1 \pmod 4$.  From the equation
$d(\Lambda) = \alpha^2-14\beta$ we see that $d(\Lambda)$ and $\alpha$
have the same parity, and by looking modulo 4, we finally find that
$\beta$ must be even.

We now define $\sigma(\Lambda)$ to be the rank 2 lattice $\langle H, L
\rangle$ with Gram matrix
$$
\begin{array}{c|cc}
\multicolumn{1}{c}{} & H  & L \\\cline{2-3}
H & 14 & \alpha \\ 
L & \alpha  & \beta 
\end{array}
$$
Then $d(\sigma(\Lambda)) = 14\beta - \alpha^2 = -d(\Lambda)$ and hence
$\sigma(\Lambda)$ is an indefinite even lattice since $d(\Lambda) > 0$
and $\beta$ is even.

We now directly calculate that the orthogonal complement of $K_{14}$
in $\Lambda$ is generated by $(4b h^2 - 3b T + 14 J)/\!\gcd(b,14)$ and
that the orthogonal complement of $H$ in $\sigma(\Lambda)$ is
generated by $(\alpha H - 14 L)/\!\gcd(\alpha,14)$.  Computing the
self-intersections of these generators yields
$14 d(\Lambda)/\!\gcd(b,14)^2$ and
$-14 d(\sigma(\Lambda))/\!\gcd(\alpha,14)^2$, respectively.  Noting
that $\alpha \equiv \pm 5b$ modulo 14, we have that
$\gcd(b,14) = \gcd(\alpha,14)$, which proves the claim about the
isometry of orthogonal complements.

Finally, we remark that $\sigma(\Lambda)$ is unique up to isometry
with these properties.  Indeed, given any rank 2 even indefinite
lattice with Gram matrix as above, after a translation and a possible
reflection, we can always choose $0 \leq \alpha \leq 7$.  But then
$\alpha,\beta$ are uniquely determined by the equation
$14\beta - \alpha^2 = -d(\Lambda)$.  So $\sigma(\Lambda)$ is unique up
to isometry.
\end{proof}

The proof of Lemma~\ref{lem:sigma} provides an algorithm, given the
Gram matrix of $\Lambda$, to calculate a Gram matrix of
$\sigma(\Lambda)$.  As an example, we calculate that the Gram matrix
of $\sigma(\Pi)$ is
\begin{equation}
\label{eq:hPi}
\begin{array}{c|cc}
\multicolumn{1}{c}{}  & H  & E \\\cline{2-3}
H & 14 & 7\\ 
E & 7 & 2 
\end{array}
\end{equation}
where $\Pi$ is the lattice in \eqref{eq:Pi}, with fixed primitive
embedding $K_{14} = \langle h^2, T \rangle \hookrightarrow \Pi$.

Now, for any rank 3 cubic fourfold lattice $\Lambda$ with a fixed
choice of primitive embedding $K_{14} \hookrightarrow \Lambda$ as in
Lemma~\ref{lem:sigma}, consider the moduli space
$\Kthree_{\sigma(\Lambda)}$ of $\sigma(\Lambda)$-polarized K3 surfaces
and the forgetful morphism
$\Kthree_{\sigma(\Lambda)} \to \Kthree_{14}$, whose image is a divisor
$\Kthree_{[\sigma(\Lambda)]} \subset \Kthree_{14}$.  For any
$\Lambda$-polarized cubic fourfold $X$, the fixed primitive embedding
$K_{14} \hookrightarrow \Lambda$ determines a discriminant $14$
marking of $X$, which induces an associated polarized K3 surface
$(S,H)$ of discriminant 14 admitting a $\sigma(\Lambda)$-polarization
by Lemma~\ref{lem:sigma}.  We recall that for discriminant $14$,
there is a unique choice of open immersion
$\cC_{K_{14}} \hookrightarrow \Kthree_{14}$, see \cite[\S6]{hassett:special}.  Then following
Hassett~\cite[\S5.2]{hassett:special}, we have the following.

\begin{prop}
\label{prop:marked_embedding}
Let $\Lambda$ be a rank 3 cubic fourfold lattice with a fixed
primitive embedding $K_{14} \hookrightarrow \Lambda$ preserving $h^2$.
Then there exists an open immersion
$\cC_\Lambda \hookrightarrow \Kthree_{\sigma(\Lambda)}$ of moduli
spaces and a commutative diagram
$$
\xymatrix@R=12pt@C=16pt{
\cC_{K_{14}} \ar@{^{(}->}[r] & \Kthree_{14}\\
\cC_\Lambda \ar[u] \ar@{^{(}->}[r] & \Kthree_{\sigma(\Lambda)} \ar[u]
}
$$
where the vertical arrows are the forgetful maps and the top
horizontal arrow is the (unique choice of) open immersion constructed
by Hassett.
\end{prop}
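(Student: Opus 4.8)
The plan is to build the bottom arrow by transporting, through Hassett's Hodge isometry, the single extra algebraic class that promotes a $K_{14}$-polarization to a $\Lambda$-polarization, and then to recognize the resulting rank $2$ sublattice of $\Pic(S)$ as $\sigma(\Lambda)$ by means of Lemma~\ref{lem:sigma}. Everything rests on the top arrow. For $d=14$ there is a unique open immersion $\cC_{K_{14}}\hookrightarrow\Kthree_{14}$ (\cite[\S6]{hassett:special}), and, following Hassett~\cite[\S5.2]{hassett:special}, it is induced by a Hodge isometry $\phi\colon K_{14}^{\perp}\isom\Pic(S)_0(-1)$ between the orthogonal complement of $K_{14}$ in $H^4(X,\Z)$ and the $(-1)$-twist of the primitive cohomology lattice $\Pic(S)_0=\langle H\rangle^{\perp}\subset H^2(S,\Z)$ of the associated polarized K3 surface $(S,H)$, canonical up to sign. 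Since the vertical map $\cC_\Lambda\to\cC_{K_{14}}$ is the forgetful map attached to the fixed embedding $K_{14}\hookrightarrow\Lambda$, composing with the top arrow already assigns to every $\Lambda$-polarized cubic fourfold $X$ an associated degree $14$ K3 surface $(S,H)$.

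First I would transport the extra class. Fix a $\Lambda$-polarized $X$, so that $\Lambda\subseteq A(X)$ is algebraic, and let $\Lambda_0$ be the orthogonal complement of $K_{14}$ in $\Lambda$, the rank $1$ lattice spanned by the class $(4bh^2-3bT+14J)/\gcd(b,14)$ of the proof of Lemma~\ref{lem:sigma}. As $\phi$ is an isometry of the full primitive cohomology lattices, it matches their discriminant forms and carries the algebraic subgroup $K_{14}^{\perp}\cap A(X)$, which contains $\Lambda_0$, isometrically up to twist onto $\langle H\rangle^{\perp}\cap\Pic(S)$. Thus $(S,H)$ acquires an algebraic class $\phi(\Lambda_0)$ orthogonal to $H$, primitive in $\langle H\rangle^{\perp}$, whose norm is the negative of $\Lambda_0^2=14\,d(\Lambda)/\gcd(b,14)^2$.

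The key step is to recognize the correct rank $2$ lattice. The orthogonal direct sum $\langle H\rangle\perp\langle\phi(\Lambda_0)\rangle$ is in general only a finite index sublattice of its saturation $M\subseteq\Pic(S)$, so to pin down $M$ one must control the discriminant-form gluing between $\langle H\rangle$ and the transported class, not merely the orthogonal complement $\phi(\Lambda_0)$. This is exactly the bookkeeping packaged by Lemma~\ref{lem:sigma}: $\sigma(\Lambda)$ is characterized as the unique rank $2$ even indefinite lattice carrying a distinguished norm $14$ class $H$, of discriminant $-d(\Lambda)$, whose $H$-orthogonal complement is isometric up to twist to $\Lambda_0$. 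Because $\phi$ transports the discriminant form of $K_{14}^{\perp}$ — hence, through Hassett's association and the uniqueness of the $d=14$ marking, that of $\langle H\rangle$ — compatibly with the complements, the saturation $M$ realizes this unique lattice, so $M\isom\sigma(\Lambda)$ primitively in $\Pic(S)$ preserving $H$. Hence $(S,H)$ is $\sigma(\Lambda)$-polarized, and we obtain the bottom arrow $\cC_\Lambda\to\Kthree_{\sigma(\Lambda)}$ sitting over the top arrow by construction.

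Finally I would promote this to an open immersion and verify the diagram. Running the transport in reverse through $\phi^{-1}$ shows that a $K_{14}$-polarized $X$ whose associated K3 surface is $\sigma(\Lambda)$-polarized admits a primitive $\Lambda\supseteq K_{14}$ in $A(X)$ preserving $h^2$; equivalently, the square is cartesian, $\cC_\Lambda\isom\cC_{K_{14}}\times_{\Kthree_{14}}\Kthree_{\sigma(\Lambda)}$, and the bottom arrow, a base change of the open immersion along the top, is itself an open immersion, with commutativity holding by construction. The dimension count $21-3=18=20-2$ is consistent, and the same conclusion may be obtained intrinsically by repeating Hassett's \S5.2 period-domain argument for the transcendental lattices $\Lambda^{\perp}=\Lambda_0^{\perp}\cap K_{14}^{\perp}$ and $\sigma(\Lambda)^{\perp}=\phi(\Lambda_0)^{\perp}\cap\langle H\rangle^{\perp}$, which $\phi$ identifies up to twist. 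The main obstacle is the discriminant-form step of the previous paragraph: ensuring that $M$ is exactly $\sigma(\Lambda)$ and not some other rank $2$ lattice of the same discriminant, nor a proper over- or sublattice. This is precisely what the uniqueness clause of Lemma~\ref{lem:sigma} (with the normalization $0\le\alpha\le 7$ and $\beta$ even) together with the uniqueness of the discriminant-$14$ marking resolves.
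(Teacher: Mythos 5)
Your proposal is correct and follows essentially the same route as the paper, which states the result with no separate proof beyond the preceding paragraph: the fixed embedding $K_{14}\hookrightarrow\Lambda$ gives a discriminant $14$ marking, the associated K3 surface acquires a $\sigma(\Lambda)$-polarization via Lemma~\ref{lem:sigma}, and the open immersion then comes from Hassett's construction in \cite[\S5.2]{hassett:special} together with the uniqueness of the discriminant $14$ marking. Your write-up simply fleshes out the details the paper leaves implicit — transporting $\Lambda_0$ through the Hodge isometry, controlling the saturation via discriminant-form gluing (valid because $H^4(X,\Z)$ and $H^2(S,\Z)$ are unimodular and $K_{14}$, $\langle H\rangle$ are primitive), and obtaining the open immersion by base change — all consistent with the paper's intended argument.
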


\section{Cubic fourfold lattice normal forms}
\label{sec:Fano_variety_of_lines}

This section is devoted to establishing normal forms for cubic fourfold
lattices of rank 3 with a discriminant 14 marking and their associated
K3 surface Picard lattices.  

\begin{prop}
\label{prop:deg_14_lattice}
Let $\Lambda$ be a rank 3 cubic fourfold lattice with a fixed
primitive embedding of $K_{14}$ preserving $h^2$.
Then there exists a basis $h^2, T, J$ of $\Lambda$ with respect to
which $\Lambda$ has Gram matrix
\begin{equation}
\label{eq:normal_form}
\begin{array}{c|ccc}
\multicolumn{1}{c}{}   &h^2& T & J \\\cline{2-4}
h^2 & 3 & 4 & 0 \\
T   & 4 & 10 & b \\
J   & 0 & b & c
\end{array}
\end{equation}
for some integers $0 \leq b \leq 7$ and $c > \max(2, 3b^2/14)$ even.
\end{prop}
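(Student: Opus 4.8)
The plan is to start from the primitivity of $K_{14} = \langle h^2, T\rangle \subset \Lambda$. Since $\Lambda$ has rank $3$ and $K_{14}$ is a primitive rank $2$ sublattice, the quotient $\Lambda/K_{14}$ is torsion-free of rank $1$, so lifting a generator produces a class $J$ for which $h^2, T, J$ is a $\Z$-basis of $\Lambda$. Writing $a = h^2.J$, $b = T.J$, $c = J^2$, the Gram matrix in this basis is the generic symmetric form, and I would first normalize the off-diagonal entry $a$ exactly as in the proof of Lemma~\ref{lem:sigma}: replacing $J$ by $J - a(T-h^2)$ and using $h^2.(T-h^2) = 1$ arranges $h^2.J = 0$ without disturbing the property that $h^2, T, J$ is a basis. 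This puts the Gram matrix into the shape of \eqref{eq:normal_form} with $(1,3)$-entry $0$, at the cost of not yet controlling $b$ and $c$.

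To normalize $b = T.J$ into the range $0 \leq b \leq 7$, I would exploit the distinguished primitive class $w = 4h^2 - 3T$, which spans $\langle h^2\rangle^\perp \cap K_{14}$ and satisfies $h^2.w = 0$ and $T.w = -14$. The admissible changes of the completion $J$ that preserve the normalized shape, i.e.\ that keep $h^2, T, J$ a basis and keep $h^2.J = 0$, are exactly $J \mapsto \pm J + m\,w$ with $m \in \Z$; under these the quantity $T.J$ transforms as $b \mapsto \pm b - 14m$. Hence $b$ is well-defined only modulo $14$ and up to sign, and since $\{0,1,\dots,7\}$ is a fundamental domain for the residues modulo $14$ under the identification $x \sim -x$, exactly one such choice lands $b$ in $[0,7]$. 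These translations do alter $c = J^2$, but this is harmless because the constraints on $c$ will be deduced intrinsically rather than from a particular completion.

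It remains to pin down the constraints on $c$. Evenness is immediate: because $h^2.J = 0$, the class $J$ lies in $\langle h^2\rangle^\perp$, which is an even lattice (the fact already invoked in Lemma~\ref{lem:sigma}), so $c = J^2$ is even. For the lower bound I would compute the discriminant directly from the normalized form, obtaining $\det\Lambda = 14c - 3b^2 = d(\Lambda)$; since $\Lambda$ is a cubic fourfold lattice it is positive definite, whence $d(\Lambda) > 0$ and thus $c > 3b^2/14$. Positive-definiteness also forces $c = J^2 > 0$, so $c \geq 2$ since $c$ is even, and the only remaining possibility to exclude is $c = 2$. Here the hypothesis that $\Lambda$ is a cubic fourfold lattice does essential work: a class $J$ with $J^2 = 2$ and $h^2.J = 0$ would be a short root, which is forbidden. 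Therefore $c \geq 4$, i.e.\ $c > 2$, and combining the two lower bounds yields $c > \max(2, 3b^2/14)$, as claimed.

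The argument is entirely elementary lattice theory, so I do not expect a deep obstacle; the points requiring genuine care are organizational. The first is the $b$-normalization, where one must observe that the reflection $J \mapsto -J$, and not merely translation by $w$, is what cuts the range from $[0,13]$ down to $[0,7]$, and that every such move preserves both the basis property and the vanishing $h^2.J = 0$. The second, more conceptual, point is to recognize that the two lower bounds on $c$ have genuinely different origins: the bound $c > 3b^2/14$ is forced by positive-definiteness, whereas the strict bound $c > 2$ is forced by the no-short-root condition in the definition of a cubic fourfold lattice. Because these bounds are intrinsic to $\Lambda$, the fact that $c$ itself varies under the $b$-normalization causes no difficulty.
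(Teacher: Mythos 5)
Your proposal is correct and follows essentially the same route as the paper's proof: normalize $h^2.J=0$ by translating along $T-h^2$, adjust $b$ modulo $14$ and up to sign using the class $\pm(3T-4h^2)$ orthogonal to $h^2$, deduce $c>3b^2/14$ from positive definiteness of the cubic fourfold lattice, and get evenness and $c>2$ from the fact that $\langle h^2\rangle^\perp$ is even with no vectors of norm $2$ (the no-short-root condition). Your write-up is slightly more explicit than the paper's in verifying that $J\mapsto \pm J+mw$ exhausts the admissible basis changes and in separating the two sources of the lower bound on $c$, but the argument is the same.
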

\begin{proof}
Just as in the proof of Lemma~\ref{lem:sigma},  we can choose the
primitive sublattice $K_{14} = \langle h^2,T \rangle$, and then there
exists a class $J \in \Lambda$ and integers $b,c$ such that that Gram
matrix of $\Lambda$ has the shape \eqref{eq:normal_form}.
Since $(3T-4h^2).h^2=0$, we can further translate $J$ to
$\pm J-m(3T-4h^2)$, which preserves $h^2.J=0$ and allows us modify $b$
modulo $14 = (3T-4h^2).T$ and up to sign, so we can choose
representatives $0 \leq b \leq 7$.  Being a primitive sublattice of
$A(X)$, we know that $\Lambda$ is positive definite, hence its
discriminant $-3b^2+14c$ must be positive, which forces $c > 3b^2/14$.
Already in the proof of Lemma~\ref{lem:sigma}, we saw that $c$ must
be even; also $c$ must be greater than 2, since
$\langle h^2 \rangle^\perp$ is an even lattice with no vectors of norm 2.  Note that a similar normal form analysis is carried out
in \cite[\S2]{ABBV:pfaffian}, \cite[Lemma~4.2]{addington_thomas}.
\end{proof}

One application of the normal form in
Proposition~\ref{prop:deg_14_lattice} is that the lattice
$\sigma(\Lambda)$ can be even more explicitly computed from
$\Lambda$.  Given $(b,c)$ that determine $\Lambda$, we compute that
$\sigma(\Lambda)$ has Gram matrix
$$
\begin{array}{c|cc}
\multicolumn{1}{c}{}  & H  & L \\\cline{2-3}
H & 14 & 2b \\ 
L & 2b  & \frac{b^2}{2}-c
\end{array}
\qquad\text{or}\qquad
\begin{array}{c|cc}
\multicolumn{1}{c}{}  & H  & L \\\cline{2-3}
H & 14 & 7-2b\\ 
L & 7-2b & \frac{b^2-4b+7}{2}-c 
\end{array}
$$
depending on whether $b$ is even or odd, respectively.  

A consequence of this calculation is that $\sigma(\Lambda)$ together
with $H$ determines the pair $(b,c)$, and hence $\Lambda$ together
with the fixed primitively embedded $K_{14}$ up to isomorphism.  In
the last line of Table~\ref{tab:BN-special_K3}, we have recorded, by
listing the pair $(b,c)$, the unique rank 3 cubic fourfold lattices
$\Lambda$ with a primitive embedding $K_{14} \hookrightarrow \Lambda$
whose associated lattice is the given $\sigma(\Lambda)$.

Finally, we are in a position to deduce the following.

\begin{prop}
\label{prop:inPi}
If a smooth cubic fourfold $X$ has an associated K3 surface $(S,H)$ of
degree 14 that is Brill--Noether special and with Clifford index 3,
then $X \in \cC_{[\Pi]}$. 
\end{prop}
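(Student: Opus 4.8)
The plan is to read off the relevant lattice from the classification already proved and then transport it across the Hassett association, using the bijectivity of the operation $\sigma$. First I would note that since $(S,H)$ is Brill--Noether special of degree $14$ with $\Cliff(S,H)=3$, Theorem~\ref{thm:classify_K3} forces $(S,H)$ to admit a lattice polarization by the rank $2$ lattice in the $\gamma=3$ column of Table~\ref{tab:BN-special_K3}, namely $\langle H,L\rangle$ with $H^2=14$, $H.L=7$, $L^2=2$. By the explicit computation \eqref{eq:hPi}, this lattice is precisely $\sigma(\Pi)$. Hence $\Pic(S)$ contains $\sigma(\Pi)$ as a primitive sublattice preserving $H$; equivalently $(S,H)\in\Kthree_{[\sigma(\Pi)]}$.

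Next I would transport the extra algebraic class back to the cubic fourfold side. The discriminant $14$ marking endows $X$ with a $K_{14}$-polarization, and the Hassett association (canonical here, since for $d=14$ there is a unique open immersion $\cC_{K_{14}}\hookrightarrow\Kthree_{14}$) provides a Hodge isometry between $K_{14}^\perp\subset H^4(X,\Z)$ and the $(-1)$-twist of the primitive cohomology lattice $\langle H\rangle^\perp\subset H^2(S,\Z)$. Restricting to Hodge classes, this identifies $K_{14}^\perp\cap A(X)$ with $(\langle H\rangle^\perp\cap\Pic(S))(-1)$. The primitive generator $M=H-2L$ of $\langle H\rangle^\perp\cap\sigma(\Pi)$ satisfies $M.H=0$ and $M^2=-6$, so under the isometry it corresponds to a primitive class $v\in K_{14}^\perp\cap A(X)$ with $v^2=6$.

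I would then let $\Lambda$ be the saturation of $\langle K_{14},v\rangle$ inside $A(X)$. Since $A(X)$ is a cubic fourfold lattice and $\Lambda$ is a primitive rank $3$ sublattice, $\Lambda$ is itself a cubic fourfold lattice containing $K_{14}$ primitively, and one checks that $K_{14}^\perp\cap\Lambda=\langle v\rangle$. Computing $\sigma(\Lambda)$ via Lemma~\ref{lem:sigma}, its distinguished class $H$ of norm $14$ has orthogonal complement isometric, up to twist, to $\langle v\rangle$, hence of norm $-6$; thus $\sigma(\Lambda)$ and $\sigma(\Pi)$ share the same distinguished class and the same orthogonal complement. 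By the bijectivity recorded after Proposition~\ref{prop:deg_14_lattice} --- that $\sigma(\Lambda)$ together with $H$ determines the pair $(b,c)$, and hence $\Lambda$ with its embedded $K_{14}$ --- this forces $\Lambda=\Pi$. Therefore $A(X)$ contains $\Pi$ primitively preserving $h^2$, i.e.\ $X\in\cC_{[\Pi]}$.

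A more streamlined variant works purely with moduli spaces: having shown $(S,H)\in\Kthree_{[\sigma(\Pi)]}$, one applies Proposition~\ref{prop:marked_embedding} with $\Lambda=\Pi$ and chases the commutative square of open immersions to lift the marked $X$ from $\cC_{K_{14}}$ to $\cC_\Pi$. In either route the main obstacle is the same, and it is worth stating carefully: one must guarantee that the $\sigma(\Pi)$-polarization of the associated K3 genuinely \emph{descends} to a $\Pi$-polarization of $X$, rather than to some non-saturated or a priori different rank $3$ lattice. In the lattice-theoretic approach this is exactly the content of the saturation step combined with the uniqueness of $\sigma^{-1}$; in the moduli approach it is the compatibility (the essential Cartesian property on the relevant loci) of the square in Proposition~\ref{prop:marked_embedding}. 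I would favor the lattice-theoretic version, since it makes the saturation and the appeal to uniqueness completely explicit.
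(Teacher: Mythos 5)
Your strategy is in fact the same as the paper's: classify $(S,H)$ via Theorem~\ref{thm:classify_K3}, transport the extra algebraic class across Hassett's Hodge isometry, saturate inside $A(X)$, identify the saturation with $\Pi$, and conclude. The paper compresses the identification into a single sentence (``a calculation of this saturation shows that, in fact, $\Lambda\isom\Pi$''), and it is exactly at this step that your argument has a genuine gap. Lemma~\ref{lem:sigma} asserts uniqueness of $\sigma(\Lambda)$ among rank $2$ even indefinite lattices with prescribed \emph{discriminant} $-d(\Lambda)$, a distinguished class $H$ of norm $14$, and prescribed orthogonal complement of $H$; you invoke only the last two items, and they do not determine the lattice. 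Indeed, $\sigma(\Pi)=\left(\begin{smallmatrix}14&7\\7&2\end{smallmatrix}\right)$ (discriminant $-21$) and $\langle 14\rangle\oplus\langle -6\rangle$ (discriminant $-84$) are both even and indefinite, both carry a distinguished class of norm $14$, and in both that class's orthogonal complement is generated by a vector of norm $-6$. These are precisely $\sigma$ of the two a priori possible saturations of $K_{14}\oplus\langle v\rangle$: the index-$2$ overlattice $\Pi$ (normal form $(b,c)=(7,12)$, discriminant $21$), and the orthogonal direct sum $K_{14}\perp\langle 6\rangle$ itself (normal form $(b,c)=(0,6)$, discriminant $84$). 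The latter is a genuine cubic fourfold lattice --- the norm form on $\langle h^2\rangle^\perp$ there is $42x^2+6y^2$, so there are no short roots, and the only norm-$6$ vectors $\pm J$ fail the divisibility condition for a long root --- so nothing in your argument excludes it. As written, your reasoning would ``prove'' $\Lambda\isom\Pi$ even if the saturation were trivial, in which case the conclusion is false: by replacing $L$ with $M=H-2L$ you discard exactly the information (the index-$2$ glue) that distinguishes $\Pi$ from $K_{14}\perp\langle 6\rangle$.

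The missing content is the proof that the saturation is proper, and for that you must transport $L$ itself. On the K3 side, $\langle H\rangle\oplus\langle M\rangle$ is not saturated in $\Pic(S)$ because $L=(H-M)/2\in\Pic(S)$; equivalently, the order-$2$ element of $d(\langle H\rangle)$ is glued to the order-$2$ element $\overline{M/2}$ of $d(\langle H\rangle^\perp)$ inside the unimodular lattice $H^2(S,\Z)$. Hassett's isometry $K_{14}^\perp\isom H^2(S,\Z)_0(-1)$ identifies discriminant groups compatibly with the glue determined by the unimodular ambient lattices $H^4(X,\Z)$ and $H^2(S,\Z)$; both glue groups are graphs of isomorphisms of $\Z/14$, which necessarily match order-$2$ elements with order-$2$ elements. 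Hence there is a class $w\in H^4(X,\Z)$ which, after adjusting by an element of $K_{14}\oplus K_{14}^\perp$, has components $t\in K_{14}\dual$ (with $2t\in K_{14}$, $t\notin K_{14}$) and $v/2\in(K_{14}^\perp)\dual$; such a $w$ is integral of type $(2,2)$, hence lies in $A(X)$ by Voisin's integral Hodge theorem, and satisfies $2w\in K_{14}\oplus\langle v\rangle$ while $w\notin K_{14}\oplus\langle v\rangle$. Only at this point do you know $d(\Lambda)=84/4=21$, after which your uniqueness/normal-form appeal correctly yields $\Lambda\isom\Pi$. Note that your ``streamlined variant'' suffers from the same gap: commutativity of the square in Proposition~\ref{prop:marked_embedding} does not let you lift a point of $\cC_{K_{14}}$ to $\cC_\Pi$ unless you already know the moduli point of $(S,H)$ lies in the image of $\cC_\Pi\hookrightarrow\Kthree_{\sigma(\Pi)}$ --- which is again the saturation computation.
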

\begin{proof}
  The cubic fourfold $X$, together with the discriminant 14 marking
  $K_{14} \hookrightarrow A(X)$ whose associated K3 surface is
  $(S,H)$, determines a point on the moduli space $\cC_{K_{14}}$.
  Under the open immersion $\cC_{K_{14}} \hookrightarrow \Kthree_{14}$
  constructed by Hassett, this point maps to $(S,H)$.  Since $(S,H)$
  is Brill--Noether special with Clifford index 3,
  Theorem~\ref{thm:classify_K3} implies that $S$ admits a
  $\Sigma$-polarization where $\Sigma$ is the lattice in the
  $\gamma=3$ column of Table~\ref{tab:BN-special_K3}.  Hence $(S,H)$
  determines a point on the moduli space $\Kthree_{\Sigma}$.  Via the
  Hodge isometry $K_d^\perp \isom \Pic(S)_0(-1)$, we lift
  $\Sigma \cap \Pic(S)_0(-1)$ to a primitive rank 3 lattice
  $\Lambda \subset A(X)$; this is nothing but the saturation of the sum of $K_d$
  and the image of $\Sigma \cap \Pic(S)_0(-1)$ in $A(X)$.  A
  calculation of this saturation shows that, in fact, $\Lambda \isom \Pi$.  Recalling
  that $\Sigma = \sigma(\Pi)$ by \eqref{eq:hPi}, we thus have that the
  moduli point of $(S,H)$ in $\Kthree_{\sigma(\Pi)}$ is in the
  image of the open immersion
  $\cC_{\Pi} \hookrightarrow \Kthree_{\sigma(\Pi)}$, which finishes
  the proof by the commutativity of the diagram in
  Proposition~\ref{prop:marked_embedding}.
\end{proof}

In fact, later on in Theorem~\ref{thm:clifford=3}, we will show that
for any cubic fourfold $X$ with a discriminant 14 marking, the
polarized K3 surface $(S,H)$ of degree 14 Hodge theoretically
associated to $X$ always has Clifford index 3.

\bigskip

We end this section with some lattice computations that will be useful later on.
Let $\Lambda=(\Z^n,b)$ be an integral nondegenerate lattice with
bilinear form $b : \Z^n \times \Z^n \to \Z$ having Gram matrix $B$ and
discriminant $d(\Lambda)=\det(B)$.  Then with respect to the dual
standard basis, the dual lattice $\Lambda\dual=(\Z^n,b\dual)$ can be
considered as a bilinear form
$b\dual : \Z^n \times \Z^n \to \frac{1}{d}\Z$ having Gram matrix
$B\inv$.  The canonical isometric embedding $\Lambda \to \Lambda\dual$
is then identified with the matrix multiplication map
$B : (\Z^n,b) \to (\Z^n,b\dual)$.  In particular, the discriminant
group $\Lambda\dual/\Lambda$ can be identified with the cokernel of
the matrix $B$, which aids in explicit computations.


We state some useful, if not easy, necessary conditions for a lattice
to occur as the intersection lattice of a smooth cubic fourfold.

\begin{lemma}
\label{lem:easy_test}
No lattice of the following type can arise as the intersection lattice
$A(X)$ of a smooth cubic fourfold $X$:
\begin{enumerate}
\item \label{lem:easy_test_uni}
 A unimodular lattice.

\item \label{lem:easy_test_p}
 A lattice with odd rank $\rho \leq 11$ and discriminant a prime
$p \equiv \rho \bmod 4$.

\item \label{lem:easy_test_2odd}
 A lattice with odd rank $\rho \leq 11$ and discriminant exactly
divisible by 2.
\end{enumerate}
\end{lemma}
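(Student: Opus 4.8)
The plan is to study $A=A(X)$ through its orthogonal complement $T=A^{\perp}$ in the odd unimodular lattice $L=H^4(X,\Z)=\langle1\rangle^{\oplus21}\oplus\langle-1\rangle^{\oplus2}$ and to extract numerical constraints from the arithmetic of discriminant forms. Recall that $A$ is positive definite of rank $\rho$, embedded primitively in $L$ (of signature $(21,2)$), with $h^2\in A$. Since $h^2\in A$ we have $T\subseteq\langle h^2\rangle^{\perp}_L$, and as the latter is even, $T$ is an even lattice of signature $(21-\rho,2)$, so $\sign(T)=19-\rho$. Because $L$ is unimodular, Nikulin's theory \cite{nikulin} yields an anti-isometry of discriminant bilinear forms $b_A\isom-b_T$; in particular $D_A\isom D_T$ and $|\disc(A)|=|\disc(T)|=:d$. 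The main tool is Milgram's formula (see \cite{nikulin}), which for the even lattice $T$ reads
\[
\frac{1}{\sqrt{|D_T|}}\sum_{x\in D_T}e^{\pi i\,q_T(x)}=e^{2\pi i\,\sign(T)/8},
\]
where $D_T=T\dual/T$ and $q_T\colon D_T\to\mathbb{Q}/2\Z$ is the discriminant quadratic form. This computes $\sign(T)\bmod 8$ from $q_T$, and the Gauss sum is multiplicative over the primary decomposition of $D_T$.

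For part \eqref{lem:easy_test_2odd}, suppose $\rho$ is odd and $2\,\|\,d$. Then the $2$-primary part of $D_T$ is $\Z/2$, on which nondegeneracy of the discriminant form forces $q_T$ to take the odd value $1/2$ or $3/2$ on the generator; its local Gauss sum is $1\pm i$, contributing $\pm1$ to $\sign(T)\bmod 8$. Every odd primary part $(\Z/p^k)$ contributes an \emph{even} amount, since a quadratic Gauss sum there is $\sqrt{p^k}$ times a fourth root of unity that is real or purely imaginary with even associated signature. Hence $\sign(T)$ is odd. But $\sign(T)=19-\rho$ is even when $\rho$ is odd, a contradiction.

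For part \eqref{lem:easy_test_p}, suppose $\rho$ is odd, $d=p$ is prime, and $p\equiv\rho\pmod4$. Now $D_T=\Z/p$ and, $p$ being odd, $q_T$ is determined by $b_T=-b_A$; writing $q_T(x)=c\,x^2/p$, the Milgram sum is a classical quadratic Gauss sum, giving $e^{2\pi i\,\sign(T)/8}=\left(\tfrac{2c}{p}\right)\varepsilon_p$ with $\varepsilon_p=1$ if $p\equiv1\pmod4$ and $\varepsilon_p=i$ if $p\equiv3\pmod4$. Thus $\sign(T)\equiv0\pmod4$ when $p\equiv1$ and $\sign(T)\equiv2\pmod4$ when $p\equiv3$ (the Legendre symbol only toggles the residue within each class). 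Since $\sign(T)=19-\rho$, this means $p\equiv1\pmod4\iff\rho\equiv3\pmod4$ and $p\equiv3\pmod4\iff\rho\equiv1\pmod4$; in either case $p\not\equiv\rho\pmod4$, contradicting the hypothesis.

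Part \eqref{lem:easy_test_uni} is the delicate one, and I expect it to be the main obstacle. If $A$ is unimodular then $d=1$, $T$ is even unimodular, and Milgram forces $\sign(T)=19-\rho\equiv0\pmod8$, i.e.\ $\rho\equiv3\pmod8$; as $1\le\rho\le21$ this leaves only $\rho\in\{3,11,19\}$, and for these a pure lattice embedding does exist ($L\isom A\oplus T$ with $T$ the unique even unimodular lattice of signature $(21-\rho,2)$). So the contradiction must instead come from the defining condition that a cubic fourfold lattice contains no short or long roots. Because $L=A\oplus T$ splits, $K:=\langle h^2\rangle^{\perp}_A$ satisfies $\langle h^2\rangle^{\perp}_L=K\oplus T$, whence $\disc(K)=3$: thus $K$ is an even positive definite lattice of rank $\rho-1\in\{2,10,18\}$ and discriminant $3$. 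If $K$ has a norm-$2$ vector it is a short root and we are done (this settles $\rho=3$, where $K\isom A_2$). Otherwise $K$ is root-free, and I would produce a long root: the splitting gives $u\in K\dual$ generating $D_K=\Z/3$ with $v:=3u\in K$, and when the minimal norm in this nontrivial coset is $2/3$ one gets $v^2=6$ with $(h^2+v)/3\in A$, exactly a long root. The hard part is precisely here, namely ruling out root-free even lattices of rank $10$ or $18$ and discriminant $3$ whose nontrivial discriminant coset has minimal norm $>2/3$ (which would evade both root conditions); I would settle this using Nikulin's classification of $3$-elementary even lattices \cite{nikulin} together with a minimal-vector computation in each genus. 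This finite but genuinely lattice-theoretic verification is the crux, whereas parts \eqref{lem:easy_test_2odd} and \eqref{lem:easy_test_p} fall out immediately from the signature formula.
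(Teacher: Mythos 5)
Your parts \eqref{lem:easy_test_p} and \eqref{lem:easy_test_2odd} are correct and are in substance the paper's own argument: the paper extracts from Nikulin's Theorem~1.10.1 the necessary condition $\sign q_A \equiv 11-\rho \bmod 4$ and then evaluates the local invariants $\sign q^p_\theta(p) \equiv 1-p \bmod 4$ and $\sign q^2_\theta(2) \equiv \pm 1 \bmod 8$; your route --- applying Milgram's formula to the even complement $T=A^\perp$ of signature $(21-\rho,2)$ and computing the Gauss sums prime by prime --- is the same computation in different packaging, and your mod~$4$ conclusions ($p \equiv \rho-2 \bmod 4$ is forced; a factor of exactly $2$ in the discriminant makes $\sign q_T$ odd) agree with the paper's.

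The genuine gap is in part \eqref{lem:easy_test_uni}, exactly where you flagged it. Your reduction is sound: Milgram forces $\rho \in \{3,11,19\}$, the splitting $L \isom A \oplus T$ reduces everything to $K = \langle h^2\rangle^\perp_A$ (even, positive definite, rank $\rho-1 \in \{2,10,18\}$, discriminant $3$), and a root of $K$ is a short root while a norm-$2/3$ vector in the nontrivial coset of $K^\vee/K$ yields a long root. But you prove nothing in ranks $10$ and $18$: you only propose to settle these cases ``using Nikulin's classification of $3$-elementary even lattices together with a minimal-vector computation in each genus.'' That tool will not do the job, since Nikulin's classification and uniqueness results for $p$-elementary lattices concern \emph{indefinite} lattices, whereas your $K$ is definite; definite genera of rank $10$ and especially $18$ with discriminant $3$ contain many classes that Nikulin does not enumerate, so the ``finite verification'' you defer to is neither supplied nor routine. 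The hole can in fact be closed cleanly, and without any long-root analysis: by the Blichfeldt bound $\gamma_n \leq \tfrac{2}{\pi}\,\Gamma(\tfrac{n}{2}+2)^{2/n}$, any even positive definite lattice of rank $n=10$ or $18$ and determinant $3$ has minimum at most $\gamma_n\, 3^{1/n}$, which is approximately $2.65$ and $3.63$ respectively, hence has minimum $2$; so $K$ always contains a root, i.e., $A$ always contains a short root, in all three ranks (rank $2$ giving $K \isom A_2$). Note for comparison that the paper disposes of \eqref{lem:easy_test_uni} in one line, citing the classification of unimodular lattices of rank $\leq 22$, all of which have short roots --- the same conclusion your argument reaches only after the missing step is filled in.
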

\begin{proof}
For \eqref{lem:easy_test_uni}, it is a consequence of the classification
of unimodular lattices of small rank that every unimodular lattice of
rank $\leq 22$ has short roots, hence cannot arise from a smooth cubic
fourfold.

We recall the notion of the discriminant form $q_A : A\dual/A \to
\Z/2\Z$ of $A=A(X)$, as well as the modulo 8 signature $\sign\, q_A$
considered in \cite[\S1]{nikulin}.  We remark that, in the notation of
\cite[Proposition~1.8.1]{nikulin}, for any odd prime $p$, we have that
$\sign\, q_\theta^p(p) \equiv 1-p \bmod 4$ and $\sign\,
q_\theta^p(p^2) \equiv 0 \bmod 8$ for any nonsquare class $\theta$
modulo $p$.  For \eqref{lem:easy_test_p}, by
\cite[Theorem~1.10.1]{nikulin}, 
for a lattice $A$ of odd rank $\leq 11$ to
be the intersection lattice of a smooth cubic fourfolds $X$, it is
necessary that the signature satisfy $\sign\, q_A \equiv 11-\rho \bmod
4$.  If $A$ has discriminant $p$, then $\sign\, q_A \equiv 1-p \bmod
4$, hence we must have, $p \equiv \rho - 2 \bmod 4$.  This is
impossible if $p \equiv \rho \bmod 4$.

For \eqref{lem:easy_test_2odd}, if 2 strictly divides $\disc(A)$, then
$q_A = q_\theta^2(2) + q(\text{odd})$, where $q(\text{odd})$ means a
finite quadratic form on a group of odd order.  By
\cite[Prop.~1.11.2*]{nikulin},
$\sign\, q_\theta^2(2) \not\equiv 0 \bmod 2$ while
$\sign\, q(\text{odd}) \equiv 0 \bmod 2$.  Hence no such cubic
fourfold $X$ exists.
\end{proof}

\section{Clifford index bounds for cubic fourfolds}
\label{sec:Pfaffian_cubic_fourfolds}

In this section, we recall the constructions of pfaffian cubic
fourfolds by Beauville and Donagi \cite{beauville_donagi} and
Brill--Noether general K3 surfaces of degree 14 by
Mukai~\cite[Thms.~3.9,~4.7]{mukai:Fano_threefolds}, working up to a
proof of our main results.  Throughout, denote by $\P(W)$ the
projective space of lines in a vector space $W$.

Let $V$ be a $\C$-vector space of dimension $6$ and consider the
subvarieties $G$ and $\Delta$ of $\P(\exterior^2 V)$ of tensors of
rank $2$ and $\leq 4$, respectively.  Then $G$ coincides with the
image of the Pl\"ucker embedding $G(2,V) \hookrightarrow
\P(\exterior^2 V)$, hence has dimension $8$ and degree $14$ by the
Schubert calculus, see \cite{fulton:young_tableaux}.  Also, $\Delta$ coincides with the vanishing locus
of the pfaffian map $\mathrm{pf} : \exterior^2 V \to \exterior^6 V$,
hence is a hypersurface of degree $3$.  We have that $G$ is the
singular locus of $\Delta$ and that $\Delta$ coincides with the secant
variety of $G$, see \cite[Rem.~1.5]{mukai:curves_grassmannians}.
Similarly, we define $G\dual \subset \Delta\dual \subset P(\exterior^2
V\dual)$.  Here, $\P(\exterior^2 V\dual)$ is the space of alternating
bilinear forms on $V$ up to homothety, and $\Delta\dual$ is the
subvariety of degenerate forms.

If $L \subset \P(\exterior^2 V)$ is a linear subspace of dimension 8
intersecting $G$ transversally then $S = L \cap G \subset L \isom
\P^8$ is the projective model of a smooth Brill--Noether general
polarized K3 surface $(S,H)$ of degree 14, see
\cite[Thm.~3.9]{mukai:Fano_threefolds}.  Conversely, if $(S,H)$ is a
Brill--Noether general polarized K3 surface of degree 14, then $S$ has
a \linedef{rigid} vector bundle $E$, unique up to isomorphism, such
that $E$ is stable of rank 2 with $\det E \isom H$ and
$\chi(S,E)=h^0(S,E)=6$, see \cite[Thm.~4.5]{mukai:Fano_threefolds}.
In particular, the evaluation morphism $H^0(S,E)\tensor \OO_S \to E$
is surjective, hence there is a grassmannian embedding $\Phi_E : S \to
G(2,H^0(S,E)\dual)$ taking $x \mapsto E_x\dual$.  Here, we think of
$E_x\dual \subset H^0(S,E)\dual$ as a 2-dimensional subspace dual to
the quotient map $H^0(S,E) \to E_x$ defined by the evaluation
morphism.  We have that $E = \Phi_E^*\EE$, where $\EE$ is the
tautological rank 2 vector (sub)bundle on $G(2,H^0(S,E)\dual)$.
Composing with the Pl\"ucker embedding, we have an embedding $S \to
\P(\exterior^2 H^0(S,E)\dual)$.  On the other hand, the exterior
square $\exterior^2 H^0(S,E) \tensor \OO_S \to \exterior^2 E$ of the
evaluation morphism defines a linear map $\lambda : \exterior^2
H^0(S,E) \to H^0(S,\exterior^2 E) \isom H^0(S,H)$.  As $\lambda$ is
surjective, we arrive at a commutative square of morphisms
$$
\xymatrix@R=23pt{
S \ar[d]_(.43){\Phi_H} \ar[r]^(.32){\Phi_E} & G(2,H^0(S,E)\dual)
\ar[d]^(.43){\text{\tiny Pl\"ucker}}\\ 
\P(H^0(S,H)\dual) \ar[r]^(.46)\mu & \P(\exterior^2 H^0(S,E)\dual)
}
$$
where $\mu$ is the linear embedding defined by $\lambda$.  A result of
Mukai is that this square is cartesian, see
\cite[Thm.~4.7]{mukai:Fano_threefolds}, hence $S$ can be written as an
intersection $S = L \cap G \subset \P(\exterior^2 V)$ in our previous
notation, where $L = \P(H^0(S,H)\dual)$, $V = H^0(S,E)\dual$, and
$G=G(2,V)$.  In conclusion, a polarized K3 surface $(S,H)$ of degree
14 is Brill--Noether general if and only if its projective model is a
transversal intersection $S = G \cap L \subset \P(\exterior^2 V)$ and
any such $(S,H)$ has a rigid vector bundle of rank 2, i.e., a rank 2
stable vector bundle $E$ with $\det(E) \isom H$ and $\chi(E)=6$.  By
\cite[Thm.~3.3(2)]{mukai:curves_K3_surfaces_Fano_3-folds},
Brill--Noether general polarized K3 surfaces $S = G \cap L$ and $S'=G
\cap L'$ are projectively equivalent if and only if $L$ and $L'$ are
equivalent under the action of $GL(V)$.  

Still letting $L \subset \P(\exterior^2 V)$ be a linear subspace of
dimension $8$, if the projective dual linear subspace
$L^\perp \subset \P(\exterior^2 V\dual)$ of dimension $5$ is not
contained in $\Delta\dual$, then
$X = L^\perp \cap \Delta\dual \subset L^\perp \isom \P^5$ is a
pfaffian cubic fourfold, see \cite[\S2]{beauville_donagi}.
Conversely, writing $L^\perp=\P(W)$ for a subspace
$W \subset \exterior^2 V\dual$, then $L^\perp$ gives rise to a global
section of the vector bundle $W\dual \tensor \OO_G(1)$, whose zero
locus is precisely $S$.

\begin{prop}
\label{prop:HPD}
Let $V$ be a vector space of dimension 6 and let
$L \subset \P(\exterior^2 V)$ be a linear subspace of dimension $8$.
Assume that $S=L \cap G$ has dimension 2 and that
$X=L^\perp \cap \Delta\dual$ has dimension 4.  If $X$ is smooth then
$S$ is smooth and in this case there is a semiorthogonal decomposition
$$
\Db(X) = \langle \mathcal{A}_X, \OO_X, \OO_X(1), \OO_X(2) \rangle
$$
and an equivalence of categories $\mathcal{A}_X \isom \Db(S)$.  More
generally, if $S$ is smooth, then the singular locus of $X$ is
contained in $L^\perp \cap G\dual$.
\end{prop}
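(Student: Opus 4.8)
The plan is to separate the two assertions: the derived-category equivalence will come from homological projective duality, while the smoothness statement is a matching of Jacobian conditions through the projective duality between $G$ and $\Delta\dual$. I begin with the semiorthogonal decomposition, which holds for \emph{every} smooth cubic fourfold and is independent of the pfaffian structure. Since $\omega_X \isom \OO_X(-3)$ by adjunction, the line bundles $\OO_X, \OO_X(1), \OO_X(2)$ form an exceptional collection: from the restriction sequences $0 \to \OO_{\P^5}(k-3) \to \OO_{\P^5}(k) \to \OO_X(k) \to 0$ with $k=-1,-2$ one reads off $H^\bullet(X,\OO_X(-1)) = H^\bullet(X,\OO_X(-2)) = 0$ and $H^\bullet(X,\OO_X) = \C$, so that $\mathrm{Ext}^\bullet(\OO_X(i),\OO_X(j)) = 0$ for $i>j$. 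The Bondal--Kapranov formalism then yields $\Db(X) = \langle \mathcal{A}_X, \OO_X, \OO_X(1), \OO_X(2)\rangle$ with $\mathcal{A}_X$ the orthogonal complement of the exceptional collection; this is Kuznetsov's K3 category, recalled e.g.\ in \cite{addington_thomas}.

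For the smoothness dictionary I would exploit two incarnations of the same linear-algebra data, writing $L^\perp = \P(W)$ and $L = \P(W^\perp)$ with $W \subset \exterior^2 V\dual$. On the one hand, $S = L \cap G$ is the zero scheme of the tautological section $s \in H^0(G, W\dual \tensor \OO_G(1))$ attached to $L^\perp$, as recalled just before the statement. Writing $T_{[U]}G = \mathrm{Hom}(U,V/U)$ and differentiating $s$ at a point $[U] \in S$, a direct computation shows that $ds_{[U]}$ fails to be surjective exactly when some nonzero $\omega \in W$ has $U \subseteq \ker\omega$; since $\dim S = 2$ is the expected dimension, $S$ is therefore singular at $[U]$ if and only if such an $\omega$ exists. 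On the other hand, $\Delta\dual$ is the projective dual of $G$, and the pfaffian gradient computation ($\mathrm{pf}$ has differential proportional to $\omega \wedge \omega \in \exterior^4 V\dual \isom \exterior^2 V$) identifies the embedded tangent hyperplane $\mathbb{T}_{[\omega]}\Delta\dual$ at a rank-$4$ point $[\omega]$ with the rank-$2$ tensor $\exterior^2 \ker\omega$. Hence at such a point $[\omega] \in X$ one has $[\omega] \in \mathrm{Sing}(X)$ if and only if $L^\perp \subseteq \mathbb{T}_{[\omega]}\Delta\dual$, i.e.\ if and only if $[\exterior^2\ker\omega] \in L$, i.e.\ $[\ker\omega] \in S$.

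Matching the two computations settles both halves of the smoothness claim. Suppose first $[\omega] \in \mathrm{Sing}(X)$ with $\omega$ of rank $4$; then $U := \ker\omega$ is two-dimensional, $[U] \in S$ by the equivalence above, and $\omega \in W$ itself witnesses $U \subseteq \ker\omega$, so the criterion forces $S$ to be singular at $[U]$. Contrapositively, if $S$ is smooth then $\mathrm{Sing}(X)$ meets only the rank-$2$ locus, giving the ``more generally'' assertion $\mathrm{Sing}(X) \subseteq L^\perp \cap G\dual$. Conversely, suppose $S$ is singular at some $[U]$ and choose $0 \neq \omega \in W$ with $U \subseteq \ker\omega$; then $[\omega] \in L^\perp$ and $\omega$ has rank $2$ or $4$. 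If the rank is $4$ then $\ker\omega = U$ and the displayed equivalence puts $[\omega] \in \mathrm{Sing}(X)$; if the rank is $2$ then $[\omega] \in L^\perp \cap G\dual = L^\perp \cap \mathrm{Sing}(\Delta\dual)$, which is automatically singular on $X$ since the defining cubic $\mathrm{pf}|_{L^\perp}$ has vanishing differential there. In either case $X$ is singular, so $X$ smooth forces $S$ smooth.

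With $X$ and $S$ now both smooth, they are dual linear sections, of complementary codimension, of the homologically projectively dual pair formed by $G = G(2,V) \subset \P(\exterior^2 V)$ and the pfaffian $\Delta\dual \subset \P(\exterior^2 V\dual)$. Kuznetsov's homological projective duality for this pair then delivers the equivalence $\mathcal{A}_X \isom \Db(S)$ between the K3 category of $X$ and the derived category of the dual section. I expect this last step to be the genuine obstacle: the pfaffian is singular along $G\dual$, so the duality is not one between two smooth varieties but requires the categorical (noncommutative) resolution of $\Delta\dual$, and it is Kuznetsov's construction of that resolution and the ensuing homological projective duality theorem---rather than the elementary tangent-space bookkeeping above---that carries the weight of the equivalence.
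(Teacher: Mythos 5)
Your proposal is correct, and it is more self-contained than the paper's own proof, which is a single citation: the paper derives the smoothness statements, the semiorthogonal decomposition, and the equivalence $\mathcal{A}_X \isom \Db(S)$ all at once from Kuznetsov's homological projective duality \cite{kuznetsov:hpd} applied to $G$ and a noncommutative resolution of $\Delta\dual$ supported along $G\dual$ (citing \cite{kuznetsov:HPD_grassmannians_lines} and \cite{kuznetsov:cubic_fourfolds}). You instead prove two of the three assertions by hand: the exceptional-collection argument for $\OO_X, \OO_X(1), \OO_X(2)$ is the standard one and is fine, and your tangent-space dictionary --- $S$ singular at $[U]$ if and only if some nonzero $\omega \in W$ has $U \subseteq \ker\omega$, versus $X$ singular at a rank-4 point $[\omega]$ if and only if $[\ker\omega] \in S$, together with the observation that $L^\perp \cap G\dual$ automatically lies in $\mathrm{Sing}(X)$ --- is a correct and complete proof of both smoothness claims. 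The only step worth making explicit is that your ``expected dimension'' remark really uses the hypothesis $\dim S = 2$: if $ds_{[U]}$ fails to be surjective then $\dim T_{[U]}S \geq 3 > \dim_{[U]}S$, which is what certifies the singularity; conversely surjectivity gives smoothness of dimension 2 at $[U]$. For the equivalence itself you rightly identify that there is no elementary shortcut: since $\Delta\dual$ is singular along $G\dual$, one needs Kuznetsov's categorical resolution and his HPD theorem for Grassmannians of lines (note that your smoothness analysis already supplies the hypothesis that makes the dual categorical section equal to $\Db(X)$, namely $X$ smooth forces $L^\perp \cap G\dual = \emptyset$), and this is exactly the input the paper cites. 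What your route buys is that the geometric content of the proposition --- which points of $X$ are singular, and why smoothness of $X$ forces smoothness of $S$ --- is made explicit rather than delegated to the general machinery; what the paper's route buys is brevity, since the HPD theorem is invoked in a form that already packages the smoothness criteria along with the derived equivalence.
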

\begin{proof}
  The smoothness statements, the semiorthogonal decomposition, and the
  equivalence of categories follow from Kuznetsov's theory of
  homological projective duality \cite{kuznetsov:hpd} applied to the
  duality between $G$ and a noncommutative resolution of
  $\Delta\dual$ that is supported along $G\dual$ (cf.\
  \cite[Thms.~4.1,~10.1,~10.4]{kuznetsov:HPD_grassmannians_lines} and
  also \cite[Thm.~3.1]{kuznetsov:cubic_fourfolds}).
\end{proof}

Building on work of Mukai~\cite{mukai:curves_grassmannians}, it is
proved in \cite[Corollary~6.4]{hoff_knutsen} that if $S=L \cap G$ is
smooth then
$L^\perp \cap G\dual \subset L^\perp \cap \Delta\dual = X$, which
contains the singular locus of $X$, is in bijection with the (finite)
set of elliptic pencils $E$ on $S$ of degree 5.

By Hassett~\cite{hassett:special}, any cubic fourfold of discriminant
14 has an associated K3 surface of degree 14.  To link pfaffian cubic
fourfolds and curves of genus 8 on the associated K3, we will need the
following.

\begin{prop}
\label{prop:pfaffian_BNgeneral}
A smooth cubic fourfold $X$ is pfaffian if and only if it has a discriminant
14 marking whose associated K3 surface $(S,H)$ is Brill--Noether
general.
\end{prop}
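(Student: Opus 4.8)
The plan is to prove both directions by exploiting the two dual descriptions assembled just before the statement: the Mukai model of Brill--Noether general K3 surfaces of degree 14 as transversal linear sections $S = L \cap G$ of the Grassmannian $G = G(2,V) \subset \P(\exterior^2 V)$, and the Beauville--Donagi realization of pfaffian cubic fourfolds as $X = L^\perp \cap \Delta\dual$ for the projectively dual linear space $L^\perp \subset \P(\exterior^2 V\dual)$. The whole point is that one and the same eight-dimensional linear subspace $L$ simultaneously produces an associated K3 surface and a cubic fourfold, and that the homological projective duality of Proposition~\ref{prop:HPD} identifies these as Hodge-theoretically associated in Hassett's sense.

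For the ``if'' direction I would start with a smooth cubic fourfold $X$ carrying a discriminant 14 marking whose associated K3 surface $(S,H)$ is Brill--Noether general. By Mukai's theorem (\cite[Thm.~3.9,~4.7]{mukai:Fano_threefolds}, recalled above), the projective model of $(S,H)$ is a transversal intersection $S = G \cap L \subset \P(\exterior^2 V)$ with $V = H^0(S,E)\dual$ for the rigid rank 2 bundle $E$. Passing to the dual linear space $L^\perp \subset \P(\exterior^2 V\dual)$, I obtain a pfaffian cubic fourfold $X' = L^\perp \cap \Delta\dual$; since $S$ is smooth, Proposition~\ref{prop:HPD} gives an equivalence $\mathcal{A}_{X'} \isom \Db(S)$ with the Kuznetsov component, and in particular $X'$ is smooth with associated K3 surface $(S,H)$. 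The remaining task is to identify $X'$ with the original $X$. Here I would invoke that a discriminant 14 marking determines the associated K3 surface via the Hodge isometry $K_{14}^\perp \isom \Pic(S)_0(-1)$, together with the Torelli theorem for cubic fourfolds (Voisin), to conclude that two smooth cubic fourfolds with the same marked associated K3 surface are isomorphic, so $X \isom X'$ is pfaffian.

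For the ``only if'' direction I would run this construction in reverse: given a pfaffian $X = L^\perp \cap \Delta\dual$, the smoothness of $X$ forces $S = L \cap G$ to be smooth by Proposition~\ref{prop:HPD}, and the cartesian-square result of Mukai shows $S$ is then automatically of the transversal-intersection type, hence a Brill--Noether general polarized K3 surface of degree 14. The equivalence $\mathcal{A}_X \isom \Db(S)$ provides the Hodge-theoretic association between $X$ and $(S,H)$, which is exactly a discriminant 14 marking with Brill--Noether general associated K3; one must check that this categorically-defined association agrees with Hassett's lattice-theoretic one, which again follows from the compatibility of Kuznetsov's component with the Mukai lattice and the period map.

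The main obstacle I anticipate is not either construction in isolation but the bookkeeping that glues them: establishing that the association ``$X \leftrightarrow (S,H)$'' coming from homological projective duality coincides with Hassett's marking-theoretic association, and that the projective-duality correspondence $L \leftrightarrow L^\perp$ is a genuine bijection on isomorphism classes rather than merely on linear sections. The former requires matching the Hodge structure on $\mathcal{A}_X$ with the transcendental lattice $K_{14}^\perp$, and the latter uses the $GL(V)$-equivalence criterion of \cite[Thm.~3.3(2)]{mukai:curves_K3_surfaces_Fano_3-folds} recalled above, so that projectively equivalent K3 models correspond to isomorphic cubic fourfolds. Once these identifications are in hand, both implications close without further computation.
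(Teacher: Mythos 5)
Your overall strategy is the same as the paper's: Mukai's model $S = L \cap G$, the dual pfaffian $X' = L^\perp \cap \Delta\dual$, Proposition~\ref{prop:HPD} to relate the two, and an Addington--Thomas-type compatibility to convert the equivalence $\mathcal{A}_{X'} \isom \Db(S)$ into a discriminant 14 marking. However, there is a genuine gap in your ``if'' direction, at the step where you write that since $S$ is smooth, Proposition~\ref{prop:HPD} gives the equivalence and ``in particular $X'$ is smooth.'' This reads Proposition~\ref{prop:HPD} backwards: smoothness of the \emph{cubic} implies smoothness of $S$ together with the semiorthogonal decomposition, whereas smoothness of $S$ only yields that the singular locus of $X'$ is contained in $L^\perp \cap G\dual$, which can perfectly well be nonempty. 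By the Hoff--Knutsen result quoted immediately after Proposition~\ref{prop:HPD}, the points of $L^\perp \cap G\dual$ are in bijection with the elliptic pencils of degree 5 on $S$; a $g^1_5$ has $\rho(8,1,5)=0$, so such pencils are compatible with $(S,H)$ being Brill--Noether general, and smooth Brill--Noether general sections $S = L \cap G$ carrying them do exist. For those, the dual pfaffian cubic is singular and has no Kuznetsov component in the required sense. So the smoothness of $X'$ is precisely the point that has to be proved, not read off from the proposition.

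The paper closes exactly this gap with a lattice-theoretic argument that your proposal never touches: since $(S,H)$ is by hypothesis associated to the \emph{smooth} cubic fourfold $X$ via a discriminant 14 marking, Remark~\ref{rem:quintic2} (which rests on Remark~\ref{rem:g15} and the normal form of Proposition~\ref{prop:deg_14_lattice}) shows that $S$ admits no line bundle $E$ with $E^2=0$, $E.H=5$ restricting to a $g^1_5$, because the corresponding rank 3 cubic fourfold lattice, the one with $(b,c)=(1,2)$, contains a short root and hence is not realized by any smooth cubic fourfold. Only after this exclusion is $L^\perp \cap G\dual = \emptyset$, so that $X'$ is smooth and Proposition~\ref{prop:HPD} together with Addington--Thomas applies; the final identification $X \isom X'$ then follows from the injectivity of $\cC_{K_{14}} \hookrightarrow \Kthree_{14}$, which is in substance the same as your appeal to Torelli, so that part of your plan is fine. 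Your ``only if'' direction matches the paper's argument and is acceptable as a sketch, but without the quintic-pencil exclusion your ``if'' direction does not close.
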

\begin{proof}
First suppose that $X=L^\perp \cap \Delta\dual$ is a smooth pfaffian
cubic fourfold. 
By Proposition~\ref{prop:HPD}, $S=L \cap G$ is a K3 surface of degree
14 with a polarization $H$ defined by the projective embedding $S \to
L \isom \P^{14}$ and there is an equivalence $\mathcal{A}_X \isom
\Db(S)$.  By
Mukai~\cite[Thms.~3.10]{mukai:curves_K3_surfaces_Fano_3-folds},
$(S,H)$ is Brill--Noether general.  By
Addington--Thomas~\cite{addington_thomas} (cf.\
\cite[Prop.~3.3]{huybrechts:K3_category_cubic_fourfold}), the
equivalence $\mathcal{A}_X \isom \Db(S)$ induces a Hodge isometry of
Mukai lattices $\widetilde{H}(\mathcal{A}_X,\Z) \isom
\widetilde{H}(S,\Z)$, which implies that $X$ has a marking of
discriminant 14 for which the associated polarized K3 surface is
$(S,H)$.

Now suppose that $X$ is a smooth cubic fourfold with a marking of
discriminant 14 whose associated polarized K3 surface $(S,H)$ is
Brill--Noether general.  Then by
Mukai~\cite[Prop.~4.7]{mukai:curves_K3_surfaces_Fano_3-folds}, $H$
defines a projective embedding whose image is
$S \isom L \cap G \subset L \isom \P^{14}$, for $L=\P(H^0(S,H)\dual)$
as described above.  Then
$X' = L^\perp \cap \Delta\dual \subset L^\perp \isom \P^5$ if a
pfaffian cubic fourfold whose singular points are in bijection with
elliptic pencils $E$ on $S$ such that $E.H=5$.  However, by
Remark~\ref{rem:quintic2}, $(S,H)$ cannot admit any such elliptic
pencils since it is associated to a smooth cubic fourfold of
discriminant 14.  Thus $X'$ must be a smooth pfaffian cubic fourfold,
so by Proposition~\ref{prop:HPD}, $\Db(X')$ has a semiorthogonal
decomposition
$\langle \mathcal{A}_{X'}, \OO_{X'}, \OO_{X'}(1), \OO_{X'}(2)\rangle$,
and there is an equivalence $\mathcal{A}_{X'} \isom \Db(S)$.  In
particular, by Addington--Thomas~\cite{addington_thomas}, $X'$ has a
marking of discriminant 14 whose associated polarized K3 surface is
$(S,H)$.  By the injectivity of
$\cC_{K_{14}} \hookrightarrow \Kthree_{14}$, we have that $X$ and $X'$
are isomorphic.  In particular, $X$ is pfaffian.
\end{proof}

In terms of the open immersion of moduli spaces,
Proposition~\ref{prop:pfaffian_BNgeneral} says that under the
open immersion $\cC_{K_{14}} \hookrightarrow \Kthree_{14}$, the pfaffian
locus coincides with the restriction of the Brill--Noether general
locus to the image.

For the very general cubic fourfold $X$ in $\cC_{14}$, the (unique)
associated polarized K3 surface $(S,H)$ of degree 14 has Picard rank
1.  The following is stated many times in the literature.

\begin{cor}
Any cubic fourfold $X$ of discriminant 14 and with $A(X)$ of rank 2 is pfaffian.
\end{cor}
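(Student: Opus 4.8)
The plan is to reduce this corollary directly to Proposition~\ref{prop:pfaffian_BNgeneral} via the Brill--Noether theory established earlier. The hypothesis is that $X \in \cC_{14}$ has $A(X)$ of rank $2$. By Hassett's classification, the only rank $2$ cubic fourfold lattice of discriminant $14$ is $K_{14}$, so the marking $K_{14} \hookrightarrow A(X)$ is in fact an isomorphism $A(X) \isom K_{14}$. I would begin by invoking the Hodge-theoretic association of Hassett: this marking produces an associated polarized K3 surface $(S,H)$ of degree $14$, and by the discussion following Lemma~\ref{lem:sigma} (and the uniqueness of the open immersion $\cC_{K_{14}} \hookrightarrow \Kthree_{14}$ for $d=14$), this association is canonical.

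The key step is to argue that $(S,H)$ must be Brill--Noether general. The cleanest route is to show that $S$ has Picard rank $1$. Indeed, since $A(X) \isom K_{14}$ has rank $2$, its orthogonal complement $K_{14}^\perp$ inside $H^4(X,\Z)$ is Hodge isometric (up to twist) to the primitive cohomology lattice $\Pic(S)_0(-1)$, and the full Picard lattice $\Pic(S)$ is the saturation of $\langle H \rangle \oplus \Pic(S)_0$. For the very general member the transcendental part is as large as possible, forcing $\Pic(S) = \langle H \rangle$ of rank $1$. By Lazarsfeld's theorem (cited at the end of \S\ref{subsec:BN-K3}), a K3 surface of Picard rank $1$ has Brill--Noether general general curves in its polarization class, hence $(S,H)$ is Brill--Noether general. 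Alternatively, and more robustly, one can argue directly by contradiction: if $(S,H)$ were Brill--Noether special, then by Theorem~\ref{thm:classify_K3} the surface $S$ would admit a rank $2$ lattice polarization from Table~\ref{tab:BN-special_K3}, which would force $\Pic(S)$ to have rank $\geq 2$; lifting this extra class back through the Hodge isometry would produce a rank $3$ sublattice of $A(X)$, contradicting $\operatorname{rank} A(X) = 2$.

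Having established that $(S,H)$ is Brill--Noether general, I would finish by applying the ``if'' direction of Proposition~\ref{prop:pfaffian_BNgeneral}: a smooth cubic fourfold possessing a discriminant $14$ marking whose associated K3 surface is Brill--Noether general is pfaffian. Since our $X$ has exactly such a marking with exactly such an associated surface, $X$ is pfaffian, as claimed.

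The main obstacle is the second step, namely the passage from ``$A(X)$ has rank $2$'' to ``$(S,H)$ is Brill--Noether general.'' One must be careful that the relevant statement concerns \emph{every} such $X$ and not merely the very general one; the phrase ``for the very general cubic fourfold'' in the preceding paragraph of the paper handles only the generic case. The lattice-theoretic contradiction argument via Theorem~\ref{thm:classify_K3} is what upgrades this to all rank $2$ members: any Brill--Noether special $(S,H)$ of degree $14$ forces $\operatorname{rank} \Pic(S) \geq 2$, hence $\operatorname{rank} A(X) \geq 3$ through the Hodge isometry between $K_{14}^\perp$ and $\Pic(S)_0(-1)$, which is precisely what the rank $2$ hypothesis rules out. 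This is the step where the full strength of the classification in \S\ref{sec:Special_divisors} is actually used.
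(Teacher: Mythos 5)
Your proposal is correct, and its skeleton coincides with the paper's: produce the associated K3 surface $(S,H)$, show it is Brill--Noether general, and conclude via Proposition~\ref{prop:pfaffian_BNgeneral}. The paper's own argument is precisely your first route: $\Pic(S)$ has rank $1$, so by Lazarsfeld \cite{lazarsfeld:Brill-Noether_without_degenerations} the smooth curves in $|H|$ are Brill--Noether general, hence $(S,H)$ is Brill--Noether general (using the implication recalled in \S\ref{subsec:BN-K3}). Where you diverge is in your assessment of that route: your worry that Picard rank $1$ holds only ``for the very general member'' is unfounded, and the appeal to very generality in your own formulation of the first route is the one weak spot in the write-up. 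The Hodge isometry $K_{14}^\perp \isom \Pic(S)_0(-1)$ matches integral $(2,2)$-classes with integral $(1,1)$-classes, so $\operatorname{rank}\Pic(S) = \operatorname{rank} A(X) - 1$ holds exactly for every marked $X$, not just generically; thus $A(X)\isom K_{14}$ of rank $2$ forces $\Pic(S)=\langle H\rangle$ on the nose, and no genericity argument is needed. Your fallback route via Theorem~\ref{thm:classify_K3} is also valid and buys independence from Lazarsfeld's theorem, at the cost of being heavier than necessary; to make it airtight, note that Theorem~\ref{thm:classify_K3} assumes $H$ globally generated, so you should add that if $|H|$ had a fixed component, then (by the remark following Theorem~\ref{thm:classify_K3}) $S$ would carry a class $E$ with $E^2=0$ and $E.H=1$, again forcing $\operatorname{rank}\Pic(S)\geq 2$ and the same contradiction with $\operatorname{rank} A(X)=2$.
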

\begin{proof}
Since the associated K3 surface $(S,H)$ has Picard rank 1, the smooth
curves $C \in |H|$ are Brill--Noether general, by
Lazarsfeld~\cite{lazarsfeld:Brill-Noether_without_degenerations}.
Hence Proposition~\ref{prop:pfaffian_BNgeneral} applies to show that $X$ is
pfaffian.
\end{proof}

As a further consequence, any cubic fourfold in $\cC_{14}\bslash \cPf$ has
$A(X)$ of rank $\geq 3$.

By Proposition~\ref{prop:pfaffian_BNgeneral}, $X$ is not pfaffian if
and only if $(S,H)$ is Brill--Noether special for every degree 14
marking on $X$.  Then by Theorem~\ref{thm:classify_K3}, we know that
$(S,H)$ must admit a lattice-polarization for some lattice in
Table~\ref{tab:BN-special_K3}.

The main result of this section is the following.

\begin{theorem}
\label{thm:clifford=3}
Let $X$ be a smooth cubic fourfold with a discriminant 14 marking and
$(S,H)$ an associated K3 surface of degree 14.  Then $\Cliff(S,H)=3$.
\end{theorem}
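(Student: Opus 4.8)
The plan is to show that none of the five Brill--Noether special lattices from Table~\ref{tab:BN-special_K3} (i.e.\ the cases $\gamma \leq 2$, together with the $\gamma=3$ lattice $\sigma(\Pi)$) can actually occur in a way that forces $\Cliff(S,H) < 3$, except in a manner already accounted for. More precisely, by Theorem~\ref{thm:classify_K3}, if $(S,H)$ is Brill--Noether special then it admits one of the five lattice polarizations $\sigma(\Lambda)$, each of which corresponds via Lemma~\ref{lem:sigma} and Proposition~\ref{prop:deg_14_lattice} to a unique rank 3 cubic fourfold lattice $\Lambda$ with its $K_{14}$-marking, recorded by the pair $(b,c)$ in the last row of the table. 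The strategy is to examine each of these candidate rank 3 lattices $\Lambda$ and show that, except for $\Lambda \isom \Pi$ (the $\gamma = 3$ case), the lattice $\Lambda$ cannot arise as a sublattice of $A(X)$ for any smooth cubic fourfold $X$ --- it fails one of the necessary lattice-theoretic conditions.

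The key tool for this elimination is Lemma~\ref{lem:easy_test}, which rules out certain lattices from ever embedding primitively in $A(X)$: unimodular lattices, and odd-rank lattices (here rank $3 \leq 11$) whose discriminant is a prime $p \equiv 3 \bmod 4$ or is exactly divisible by $2$. Thus the concrete first step is to compute, for each of the four lattices with $\gamma \leq 2$, the discriminant $d(\Lambda) = -3b^2 + 14c$ from the tabulated $(b,c)$ and check divisibility conditions. One expects that each of these four lattices violates either the prime-discriminant condition~\eqref{lem:easy_test_p} or the exactly-divisible-by-2 condition~\eqref{lem:easy_test_2odd} of Lemma~\ref{lem:easy_test}, so that no smooth cubic fourfold $X$ can carry that lattice inside $A(X)$. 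Since $A(X)$ must primitively contain $\Lambda = \sigma^{-1}(\sigma(\Lambda))$ whenever $(S,H)$ is the associated K3 and $S$ carries the corresponding $\Sigma$-polarization, eliminating $\Lambda$ eliminates that value of $\gamma$.

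Having disposed of $\gamma \leq 2$, the only surviving Brill--Noether special possibility is $\gamma = 3$ with $\sigma(\Lambda) = \sigma(\Pi)$, and here there is nothing to rule out: this is precisely the case $\Cliff(S,H) = 3$. Combining this with the dichotomy already established before the theorem --- namely that $(S,H)$ is either Brill--Noether general (whence $\Cliff(S,H) = 3$ by Theorem~\ref{thm:BNgeneral} and the discussion following Theorem~\ref{thm:green_laz}) or Brill--Noether special --- yields that in every case $\Cliff(S,H) = 3$, since $\Cliff(S,H) \leq 3$ always holds in degree 14.

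The step I expect to be the main obstacle is verifying that the lattice-theoretic eliminations via Lemma~\ref{lem:easy_test} are genuinely exhaustive and that I have correctly matched each $\sigma(\Lambda)$ to its cubic fourfold lattice $\Lambda$ and computed $d(\Lambda)$ correctly from $(b,c)$. There is a subtlety worth flagging: the hypotheses of Theorem~\ref{thm:classify_K3} produce an \emph{upper} bound on $\gamma(S,H)$, and one must be careful that ruling out the lattices for $\gamma \leq 2$ truly contradicts the existence of a cubic fourfold, rather than merely constraining $S$. The cleanest formulation is that for a cubic fourfold $X$, the K3 Picard lattice $\Pic(S) \supset \sigma(\Lambda)$ forces $A(X) \supset \Lambda$ primitively via the Hodge isometry $K_{14}^\perp \isom \Pic(S)_0(-1)$; so if $\Lambda$ fails Lemma~\ref{lem:easy_test}, that branch is geometrically empty. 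The residual care needed is whether a low-$\gamma$ K3 could also admit the $\sigma(\Pi)$-polarization (higher Picard rank), but this does not affect the conclusion, since it only makes $\Cliff(S,H)$ \emph{smaller or equal}, and the argument shows any such smaller value is obstructed.
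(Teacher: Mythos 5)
Your proof follows the same skeleton as the paper's --- reduce to the Brill--Noether special case, use Theorem~\ref{thm:classify_K3} and Proposition~\ref{prop:marked_embedding} to produce a rank 3 lattice $\Lambda$ with the pairs $(b,c)$ of Table~\ref{tab:BN-special_K3} inside $A(X)$, then eliminate the four lattices with $\gamma\leq 2$ --- but your elimination step does not work. You propose to rule out these lattices with Lemma~\ref{lem:easy_test}, expecting each to violate the prime-discriminant condition or the exactly-divisible-by-2 condition. Computing $d(\Lambda)=-3b^2+14c$ for $(b,c)=(6,8),(5,6),(2,2),(4,4)$ gives $d(\Lambda)=4,\,9,\,16,\,8$, respectively. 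None of these is $1$, none is a prime, and none is exactly divisible by $2$ (they are $2^2$, $3^2$, $2^4$, $2^3$), so Lemma~\ref{lem:easy_test} eliminates none of the four cases. There is also a structural problem even if the numerics had worked out: Lemma~\ref{lem:easy_test} gives conditions under which a lattice cannot \emph{equal} the full intersection lattice $A(X)$; it says nothing about lattices that merely embed primitively into $A(X)$. Since $A(X)$ can have rank greater than $3$, showing that $\Lambda$ itself cannot be an intersection lattice would not preclude the existence of $X$ with $\Lambda\subset A(X)$.

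The obstruction that actually does the job --- and the one the paper uses --- is the root condition built into the definition of a cubic fourfold lattice (via Looijenga and Laza): if $\Lambda$ contains a short root ($v^2=2$, $v.h^2=0$) or a long root ($v^2=6$, $v.h^2=0$, $v\pm h^2$ divisible by $3$ in $\Lambda$), then no smooth $\Lambda$-polarized cubic fourfold exists. Crucially, these conditions are intrinsic to $\Lambda$ and persist in any overlattice, so they \emph{do} obstruct primitive embeddings $\Lambda\hookrightarrow A(X)$, unlike the discriminant tests. Concretely, in the normal form of Proposition~\ref{prop:deg_14_lattice}: for $(b,c)=(6,8)$ the class $4h^2-3T+2J$ has norm $48+90+32-96-72=2$ and is orthogonal to $h^2$, a short root; for $(2,2)$ the class $J$ is a short root; and for $(5,6)$ and $(4,4)$ the class $v=4h^2-3T+3J$ has norm $42+9c-18b=6$, is orthogonal to $h^2$, and satisfies $v-h^2=3(h^2-T+J)$, a long root. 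These four explicit computations are what your proposal is missing, and no amount of care with Lemma~\ref{lem:easy_test} will substitute for them.
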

\begin{proof}
As noted in \S\ref{subsec:BN-K3}, if $(S,H)$ is Brill--Noether
general, then $\Cliff(S,H)=3$.  So we can assume that $(S,H)$ is
Brill--Noether special. In particular, $A(X)$ has rank $\geq 3$ and
let $\Lambda\subset A(X)$ be a primitive sublattice of rank 3
containing the marking $K_{14}$. Then by
Theorem~\ref{thm:classify_K3}, $(S,H)$ would admit an appropriate
$\sigma(\Lambda)$-polarization for $\sigma(\Lambda)$ given on
Table~\ref{tab:BN-special_K3}.  By
Proposition~\ref{prop:marked_embedding}, any such cubic fourfold $X$
would have a $\Lambda$-polarization, for the unique rank 3 cubic
fourfold lattice $\Lambda$ with specified $\sigma(\Lambda)$.  We have
enumerated these lattices $\Lambda$ in Table~\ref{tab:BN-special_K3}.
We now show that each such lattice $\Lambda$ corresponding to
$\Cliff(S,H) < 3$ has roots, hence no smooth $\Lambda$-polarized cubic
fourfolds exist in these cases.

When $\Cliff(S,H)=0$, the cubic fourfold lattice $\Lambda$ with
$(b,c)=(6,8)$ has short root $4h^2-3T+2J$.  Hence $A(X) \supset
\Lambda$ would contain a short root, thus no such smooth cubic
fourfold exists.

When $\Cliff(S,H)=1$, the cubic fourfold lattice $\Lambda$ with
$(b,c)=(5,6)$ has long root $4h^2-3T+3J$.  Hence $A(X) \supset
\Lambda$ would contain a long root, thus no such smooth cubic fourfold
exists.

When $\Cliff(S,H)=2$, we have two choices.  The cubic fourfold lattice
$\Lambda$ with $(b,c)=(2,2)$ has short root $J$.  Hence $A(X) \supset
\Lambda$ would contain a short root, thus no such smooth cubic
fourfold exists.  The cubic fourfold lattice $\Lambda$ with
$(b,c)=(4,4)$ has long root $4h^2-3T+3J$.  Hence $A(X) \supset
\Lambda$ would contain a long root, thus no such smooth cubic fourfold
exists.

This rules out all possibilities with $\Cliff(S,H) < 3$.  Hence
$(S,H)$ is Brill--Noether special with $\Cliff(S,H)=3$.
\end{proof}

\begin{remark}
\label{rem:quintic2}
This is a continuation of Remark~\ref{rem:g15}.  The rank 3 cubic
fourfold lattice~$\Lambda$, whose associated K3 surface lattice
$\sigma(\Lambda)$ has degree 14 generated by $H$ and~$E$ with $E.H=5$ and
$E^2=0$, must have $(b,c)=(1,2)$.  In this case, $\Lambda$ has short
root~$J$.  We conclude that there exist no smooth cubic fourfolds $X$
whose associated K3 surface has a line bundle restricting to a $g^1_5$
on the smooth genus 8 curves in the polarization class.  
\end{remark}

Given an admissible $d = 2g-2 > 6$, we wonder which values $0 \leq
\Cliff \leq {\lfloor (g-1)/2 \rfloor}$ can be realized by a Clifford
indices polarized K3 surfaces associated to Brill--Noether special
cubic fourfolds of discriminant $d$?  For example, we expect that
there are no ``hyperelliptic'' or ``trigonal'' special cubic fourfolds
of any admissible discriminant.

\section{Complement of the pfaffian locus}

In this section, we can finally prove Theorem~\ref{thm:main}.  We also
show that cubic fourfolds can admit multiple discriminant 14 markings,
some Brill--Noether special and some Brill--Noether general, implying
that the pfaffian locus is not open inside $\cC_{14}$.

\begin{proof}[Proof of Theorem~\ref{thm:main}]
As a consequence of Theorem~\ref{thm:clifford=3}, the only component
of the Brill--Noether special locus of $\cK_{14}$ that intersects the
image of $\cC_{K_{14}}$ is the one corresponding to $\gamma=3$ in
Table~\ref{tab:BN-special_K3}.  By Propositions~\ref{prop:Pi} and
\ref{prop:inPi}, we have that the intersection of this component with
the image of $\cC_{K_{14}}$ in $\cK_{14}$ coincides with the locus
$\cC_\Pi \subset \cC_{K_{14}}$ (where we always consider $\Pi$ with
the  fixed discriminant 14 marking derived from
\eqref{eq:Pi}).  Applying the forgetful map, we see that the
complement of the pfaffian locus in $\cC_{14}$ is contained in $\cC_{[\Pi]}$.
\end{proof}

\begin{example}
\label{notopen}
Consider the lattice $\Lambda$:
$$
\begin{array}{c|cccc}
\multicolumn{1}{c}{}   &h^2& T & P & P' \\\cline{2-5}
h^2 &3 &4 &1 &1 \\
T   &4 &10&0 &0 \\
P   &1 &0 &3 &0 \\ 
P'  &1 &0 &0 &3
\end{array}
$$
One can check, using the result of
Laza~\cite{laza:period_cubic_fourfold} and
Looijenga~\cite{looijenga:period_cubic_fourfold} on the image of the
period map, that $\Lambda$ is a cubic fourfold lattice, implying that
the locus $\cC_{[\Lambda]}$ of cubic fourfolds admitting a
$\Lambda$-polarized has codimension 3 in the moduli space $\cC$.  We
remark that the explicit example found by computers in
\cite[Ex.~A.2]{bolognesi_russo} is contained in $\cC_{[\Lambda]}$, and
motivated its definition.  By Proposition~\ref{prop:Pi}, the classes
$P$ and $P'$ correspond to disjoint planes, and the class $T$
generates a marking of discriminant 14, hence $\cC_\Lambda$ is a
divisor in $\cC_\Pi$.  Consider the four discriminant 14 markings
generated by the classes:
$$
T,\quad
T' = 2 h^2 - P - P',\quad
T'' = 3 h^2 - T - P,\quad
T''' = 3h^2 - T - P'.
$$
We compute that the polarized K3 surfaces $(S,H)$ of degree 14
associated to these markings admit lattice-polarizations for the
following lattices, respectively:
$$
\begin{array}{c|ccc}
\multicolumn{1}{c}{}   &H& C & C' \\\cline{2-4}
H &14&2 &2 \\
C &2 &-2&1 \\
C'&2 &1 &-2
\end{array}
\quad
\begin{array}{c|ccc}
\multicolumn{1}{c}{}   &H& L & C \\\cline{2-4}
H &14&7&4 \\
L &7 &2&2 \\
C &4 &2&-2
\end{array}
\quad
\begin{array}{c|ccc}
\multicolumn{1}{c}{}   &H& C & L \\\cline{2-4}
H &14&2 &1 \\
C &2 &-2&0 \\
L &1 &0 &-2
\end{array}
\quad
\begin{array}{c|ccc}
\multicolumn{1}{c}{}   &H& C & L \\\cline{2-4}
H &14&2 &1 \\
C &2 &-2&0 \\
L &1 &0 &-2
\end{array}
$$
Clearly $T''$ and $T'''$ are permuted up to reordering the planes $P$
and $P'$, so the last two lattice polarizations are isomorphic.

Now assume that $X$ is very general in $\cC_{[\Lambda]}$.  Then the
associated K3 surfaces have Picard lattices isomorphic to the ones
above and in all four cases, one can verify that the degree 14
polarization class is very ample by an analysis of the ample cone.  In
the first, third, and fourth cases, one can also verify using
Theorem~\ref{thm:classify_K3} that the smooth genus 8 curves in the
polarization class are Brill--Noether general, hence by
Proposition~\ref{prop:pfaffian_BNgeneral}, that $X$ is pfaffian (in
multiple ways).  In the second case, one can verify that the second
generator $C$ restricts to a $g^2_7$ on the smooth genus 8 curves in
the polarization class (hence they are Brill--Noether special by
Lemma~\ref{lem:BN-special_g27}).  By Theorem~\ref{thm:BNgeneral}, the
polarized K3 surface associated to this second marking is
Brill--Noether special.
\end{example}

\vspace{1.5cm}

\providecommand{\bysame}{\leavevmode\hbox to3em{\hrulefill}\thinspace}

\end{document}